\theoremstyle{plain}      
\newtheorem{theorem}{Theorem}[section]     
\newtheorem{lemma}[theorem]{Lemma}
\theoremstyle{remark}      
\newtheorem{example}[theorem]{Example} 
\newtheorem{remark}[theorem]{Remark} 
\newtheorem{definition}[theorem]{Definition}     
\newcounter{FNC}[page]
\def\fauxfootnote#1{{\addtocounter{FNC}{2}\Magenta{$^\fnsymbol{FNC}$}%
     \let\thefootnote\relax\footnotetext{\Magenta{$^\fnsymbol{FNC}$#1}}}}
\newcommand{\TEN}{\fontsize{10}{12.2}\selectfont}
\newcommand{\defcolor}[1]{\Blue{#1}}
\newcommand{\demph}[1]{\defcolor{{\sl #1}}}
\def\Color#1#2{#2}
\newcommand{\RP}{\Red{\,\raisebox{-5pt}{\rule{0pt}{17pt}}}}
\def\Nred#1{\Color{0 0.6 0.6 0}{#1}}
\newcommand{\cbd}{{\Black{\bullet}\Nred{\bullet}}}
\newcommand{\Id}{\mbox{\it I}}
\newcommand{\Gr}{\mbox{\rm Gr}}
\renewcommand{\P}{{\mathbb P}}
\newcommand{\C}{{\mathbb C}}
\newcommand{\calM}{{\mathcal M}}
\newcommand{\calT}{{\mathcal T}}
\newcommand{\calX}{{\mathcal X}}
\newcommand{\calY}{{\mathcal Y}}
\newcommand{\bff}{{\bf f}}
\newcommand{\bfh}{{\bf h}}
\newcommand{\bfm}{{\bf m}}
\DeclareMathOperator{\rank}{rank}
\DeclareMathOperator{\Mat}{Mat}
\newenvironment{breakablealgorithm}
  {
   \begin{center}
     \refstepcounter{algorithm}
     \hrule height.8pt depth0pt \kern2pt
     \renewcommand{\caption}[2][\relax]{
       {\raggedright\textbf{\ALG@name~\thealgorithm} ##2\par}%
       \ifx\relax##1\relax 
         \addcontentsline{loa}{algorithm}{\protect\numberline{\thealgorithm}##2}%
       \else 
         \addcontentsline{loa}{algorithm}{\protect\numberline{\thealgorithm}##1}%
       \fi
       \kern2pt\hrule\kern2pt
     }
  }{
     \kern2pt\hrule\relax
   \end{center}
  }
\newcommand{\g}{\hspace{6pt}}
\newcommand{\MI}{\begin{smallmatrix}1&0&0&0\\0&1&0&0\\0&0&1&0\\0&0&0&1\end{smallmatrix}}
\newcommand{\MII}{\begin{smallmatrix}1&0&\g 0&0\\0&1&\g 0&0\\0&0&\g 1&1\\0&0&-1&0\end{smallmatrix}}
\newcommand{\MIII}{\begin{smallmatrix}1&\g 0&0&0\\0&\g 1&1&0\\0&-1&0&1\\0&\g 1&0&0\end{smallmatrix}}
\newcommand{\MIV}{\begin{smallmatrix}\g 1&1&0&0\\-1&0&1&0\\\g 1&0&0&1\\-1&0&0&0\end{smallmatrix}}
\newcommand{\MV}{\begin{smallmatrix}\g 1&1&\g 0&0\\-1&0&\g 1&1\\\g 1&0&-1&0\\-1&0&\g 0&0\end{smallmatrix}}
\newcommand{\MVI}{\begin{smallmatrix}\g 1&\g 1&1&0\\-1&-1&0&1\\\g 1&\g 1&0&0\\-1&\g 0&0&0\end{smallmatrix}}
\newcommand{\MVII}{\begin{smallmatrix}\g 1&\g 1&\g 1&1\\-1&-1&-1&0\\\g 1&\g 1&\g 0&0\\-1&\g 0&\g 0&0\end{smallmatrix}}
\newcommand{\MIt}{\begin{smallmatrix}1&0&\g 0&0\\0&1&\g 0&0\\0&0&\g 1&1\\0&0&-t&0\end{smallmatrix}}
\newcommand{\MIIt}{\begin{smallmatrix}1&\g 0&0&0\\0&\g 1&1&0\\0&-t&0&1\\0&\g t&0&0\end{smallmatrix}}
\newcommand{\MIIIt}{\begin{smallmatrix}\g 1&1&0&0\\-t&0&1&0\\\g t&0&0&1\\-t&0&0&0\end{smallmatrix}}
\newcommand{\MIVt}{\begin{smallmatrix}\g 1&1&0&0\\-1&0&1&1\\\g 1&0&t&0\\-1&0&0&0\end{smallmatrix}}
\newcommand{\MVt}{\begin{smallmatrix}\g 1&\g 1&1&0\\-1&-t&0&1\\\g 1&\g t&0&0\\-1&\g 0&0&0\end{smallmatrix}}
\newcommand{\MVIt}{\begin{smallmatrix}\g 1&\g 1&\g 1&1\\-1&-1&-t&0\\\g 1&\g 1&\g 0&0\\-1&\g 0&\g 0&0\end{smallmatrix}}
\begin{document}     


\title[Numerical Schubert Calculus]
{Numerical Schubert Calculus via the Littlewood-Richardson Homotopy Algorithm} 

\author[Leykin]{Anton Leykin}
\address{School of Mathematics\\
         Georgia Institute of Technology\\
         686 Cherry Street\\
         Atlanta, GA 30332-0160\\
         USA}
\email{leykin@math.gatech.edu}
\urladdr{http://people.math.gatech.edu/~aleykin3/}
\author[Mart\'in del Campo]{Abraham Mart\'in del Campo}
\address{Abraham Mart\'in del Campo\\
         Centro de Investigaci\'on en Matem\'aticas, A.C.\\
         Jalisco S/N, Col. Valenciana\\
         36023, Guanajuato, Gto.  \\
         M\'exico}
\email{abraham.mc@cimat.mx}
\urladdr{http://personal.cimat.mx:8181/~{}abraham.mc/}
\author[Sottile]{Frank Sottile}
\address{Frank Sottile\\
         Department of Mathematics\\
         Texas A\&M University\\
         College Station\\
         Texas \ 77843\\
         USA}
\email{sottile@math.tamu.edu}
\urladdr{www.math.tamu.edu/\~{}sottile}
\author[Vakil]{Ravi Vakil}
\address{Ravi Vakil\\
      Department of Mathematics\\ 
      Stanford University\\ 
      Stanford, CA 94305
       USA}
\email{vakil@math.stanford.edu} 
\urladdr{http://math.stanford.edu/\~{}vakil}
\author[Verschelde]{Jan Verschelde}
\address{Jan Verschelde\\
      Dept of Math, Stat, and CS\\
      University of Illinois at Chicago\\ 
      851 South Morgan (M/C 249)\\ 
      Chicago, IL 60607
      USA} 
\email{jan@math.uic.edu} 
\urladdr{http://www.math.uic.edu/\~{}jan}
\thanks{This project was supported by American Institute of Mathematics through their SQuaREs program.}
\thanks{Work of Leykin supported in part by the National Science Foundation under grant DMS-1719968}
\thanks{Work of Mart\'in del Campo supported in part by CONACyT under grant C\'atedra-1076}
\thanks{Work of Sottile supported in part by the National Science Foundation under grant DMS-1501370}
\thanks{Work of Vakil supported in part by the National Science Foundation under grant DMS-1500334}
\thanks{Work of Verschelde supported in part by the National Science Foundation under grants ACI-1440534 and DMS-1854513}
\subjclass[2010]{14N15, 65H10}
\keywords{Schubert Calculus, Grassmannian, Littlewood-Richardson Rule, Numerical Homotopy Continuation} 

\begin{abstract}
 We develop the Littlewood-Richardson homotopy algorithm, which uses 
 numerical continuation to compute solutions to Schubert problems on Grassmannians 
 and is based on the geometric Littlewood-Richardson rule.
 One key ingredient of this algorithm is our new optimal formulation of Schubert problems in 
 local Stiefel coordinates as systems of equations.
 Our implementation can solve problem instances with tens of thousands of solutions.
\end{abstract}

\maketitle

The Schubert calculus on the Grassmannian~\cite{KL72} studies
the linear subspaces that have specified positions with 
respect to fixed flags of linear spaces.
This is a rich class of well-understood geometric problems that appear 
in applications such as 
the pole placement problem in linear systems
theory~\cite{By89,EG2002,KBKK,VW04b}
and in information theory~\cite{BCT}.
Schubert problems serve as a laboratory for investigating new phenomena in enumerative geometry, such as possible numbers
of real solutions~\cite{FRSC-Secant,HHS,So_Shap,RSEG,Ver2000} or monodromy/Galois groups~\cite{LS09,MS,SW,GIVIX}. 
While classical algorithms count the number of solutions~\cite{Fu97},
these applications drive a need to compute the actual solutions to Schubert problems. 

General blackbox symbolic and numerical methods for solving systems of polynomial equations do not perform well on large
Schubert problems, as they are not complete intersections. 
\demph{Numerical Schubert calculus} consists of numerical algorithms adapted to the structure of Schubert problems.
A homotopy algorithm is \demph{optimal} when no solution path diverges for generic instances of the problem.
The Pieri homotopy algorithm for solving special Schubert problems~\cite{HSS98} is an optimal algorithm for Schubert calculus. 
That algorithm is based on a proof of Pieri's rule using geometric specializations~\cite{So97}.
It was implemented and refined~\cite{LWW02,Ver2000,VW04b}, and has been used to 
compute feedback laws for linear systems~\cite{VW04b} and to compute
Galois groups of Schubert problems~\cite{LS09}. 
Special Schubert problems can be formulated as imposing simple rank-deficiency on
several matrices of general linear forms.
Specialized algorithms for solving simple rank-deficiency on a matrix with polynomial
entries were recently developed in~\cite{HSDSV}.

The more general Littlewood-Richardson rule was given a proof using geometric specializations 
organized by a combinatorial checkers game~\cite{Va06a,Vak06b}. 
This geometric rule leads to our main contribution, the first general Littlewood-Richardson homotopy algorithm. 
A preliminary study for this was carried out in~\cite{SVV} for some Schubert problems with a handful of solutions.
The present work is far more intricate and the resulting algorithm is
applicable to {\it any} Schubert problem on a Grassmannian.
A novel feature is that in the homotopy, the underlying space and its parametrization change, but the equations do not.
We have implemented the Littlewood-Richardson homotopy algorithm both in the 
{\tt NumericalSchubertCalculus} package of {\tt Macaulay2}~\cite{M2} and in {\tt PHCpack}~\cite{PHCpack}.
Our software is free and open source, available on {\tt github},
and capable of solving problems with tens of thousands of solutions, 
which are currently far out of reach for all other available methods.   

Section~\ref{S:SC} gives background on the Schubert calculus and numerical homotopy continuation.  
This includes a new formulation for Schubert varieties using the fewest possible number of equations.
Section~\ref{S:GLRR} describes the geometric Littlewood-Richardson rule, 
which is the foundation of our algorithm. 
Section~\ref{S:LRH} is the heart of the paper, 
for it describes the Littlewood-Richardson homotopy algorithm in detail.
Section~\ref{S:Examples} gives some examples of what our software can compute.
Details of the implementations will appear in~\cite{NSC_article}.

\section{Schubert Calculus and Homotopy Continuation}\label{S:SC} 

We describe Schubert problems and explain how they may be
represented on a computer with an efficient set of equations.
This is in terms of local Stiefel coordinates and exploits the Pl\"ucker embedding.
We conclude with a discussion on numerical homotopy continuation.
We will fix positive integers $k<n$ throughout.

\subsection{Schubert problems }\label{SS:SchubertProblems}

The \demph{Grassmannian} \defcolor{$\Gr(k,n)$} of $k$-planes in $\C^n$ is a complex manifold of dimension
$k(n{-}k)$.
It has Schubert subvarieties indexed by \demph{brackets}, which are $k$-element subsets $\alpha$
of $\defcolor{[n]}:=\{1,\dotsc,n\}$, written in increasing order $\alpha\colon\alpha_1<\dotsb<\alpha_k$.
Write \demph{$\binom{[n]}{k}$} for the set of all brackets.
A \demph{flag} $F$ is an increasing sequence of linear subspaces,
\[
   F\ \colon\ F_1\ \subset\ F_2\ \subset\ \dotsb\ \subset\ F_n\ =\ \C^n\,,
   \qquad\mbox{\rm with}\quad \dim F_i=i\,.
\]
A bracket $\alpha\in\binom{[n]}{k}$ and a flag $F$ determine a \demph{Schubert variety}, 
\[
   \defcolor{X_\alpha F}\ :=\ \{H\in\Gr(k,n)\;\mid\;
     \dim(H\cap F_{\alpha_i})\geq i\ \mbox{\rm for}\ i=1,\dotsc,k\}\,.
\]
This variety has dimension $\defcolor{|\alpha|}:= \sum_{i=1}^k(\alpha_i{-}i)$ and its
codimension in $\Gr(k,n)$ is $\defcolor{\|\alpha\|}:=k(n{-}k)-|\alpha|$.

The bracket $[3,4,7,8]\in\binom{8}{4}$ 
and a flag $F$ in $\C^8$ determine the Schubert variety,
 \begin{multline}\label{Eq:X3478}
  \qquad  X_{[3,4,7,8]}F\ =\ \{ H\in\Gr(4,8)\mid
     \dim H\cap F_3\geq 1\,,\ 
     \dim H\cap F_4\geq 2\,,\ \\
     \dim H\cap F_7\geq 3\,,\ \mbox{and}\ 
     \dim H\cap F_8\geq 4\}\,.\qquad
 \end{multline}
This subvariety of $\Gr(4,8)$ has dimension $12=(3-1)+(4-2)+(7-3)+(8-4)$ and codimension $4=4\cdot(8-4)-12$.

The geometric problems studied in Schubert calculus are given by lists of brackets $(\alpha^1,\dotsc,\alpha^s)$ and
flags $F^1,\dotsc,F^s$, and involve understanding the set of $k$-planes in the intersection 
 \begin{equation}\label{Eq:Instance_of_Schubert_Problem}
   X_{\alpha^1}F^1\ \cap\ 
   X_{\alpha^2}F^2\ \cap\ \dotsb\ \cap\ 
   X_{\alpha^s}F^s\,.
 \end{equation}
When the flags $F^1,\dotsc,F^s$ are general and the brackets satisfy
$\|\alpha^1\|+\dotsb+\|\alpha^s\|=k(n{-}k)$, this intersection~\eqref{Eq:Instance_of_Schubert_Problem} 
is zero-dimensional and transverse~\cite{Kl74}, and its
number of points, \defcolor{$d(\alpha^1,\dotsc,\alpha^s)$}, does not depend on the flags.
This number may be computed using combinatorial algorithms from the Schubert calculus~\cite{Fu97}. 
A list of brackets $(\alpha^1,\dotsc,\alpha^s)$ satisfying 
$\|\alpha^1\|+\dotsb+\|\alpha^s\|=k(n{-}k)$ is a \demph{Schubert problem}.
An \demph{instance} of that Schubert problem is given by flags $F^1,\dotsc,F^s$, and 
its \demph{solutions} are the points of the intersection~\eqref{Eq:Instance_of_Schubert_Problem}.

The most basic Schubert problem is $(\alpha,\beta)$ where $\|\alpha\|+\|\beta\|=k(n{-}k)$.
An instance is given by two general flags $F,M$.
The intersection $X_\alpha F\cap X_\beta M$ is empty 
unless $\beta_{k+1-i}=n{+}1-\alpha_i$ for $i=1,\dotsc,k$, and in that case it is the singleton,
 \begin{equation}\label{Eq:trivSChProb}
   X_\alpha F \cap X_\beta M\ =\ \Bigl\{\bigoplus_{i=1}^k F_{\alpha_i}\cap M_{n{+}1-\alpha_i}\Bigr\}\,.
 \end{equation}
As the flags $F$ and $M$ are in general position, $F_{\alpha_i}\cap M_{n{+}1-\alpha_i}$
is one-dimensional.

\subsection{Representing Schubert problems on a computer}

To solve a Schubert problem on a computer 
requires that it be formulated as a system of polynomial equations in some coordinates. 
There are several formulations, including global Pl\"ucker coordinates, local Stiefel coordinates, and 
more exotic primal-dual~\cite{HaHS} or lifted~\cite{HS_Sq} coordinates.
An advantage of local Stiefel coordinates is that they involve the fewest
variables.

An ordered basis $\bff_1,\dotsc,\bff_n$ of $\C^n$ forms the columns of an invertible matrix 
in $\C^{n\times n}$
and vice-versa, with the standard basis corresponding to the identity matrix, \defcolor{$\Id$}.
Given such a basis/matrix, we obtain a flag whose $i$-dimensional subspace is the span of 
the columns $\bff_1,\dotsc,\bff_i$. 
Therefore, two matrices $F,F'$ correspond to the same flag if and only if there is an invertible 
upper triangular matrix $T$
such that $F'=FT$.  
We use the same symbol for an invertible matrix and for the corresponding flag.

The \demph{Stiefel manifold} is the set \defcolor{$\calM_{k,n}$} of $n\times k$ matrices of full rank $k$.
Taking column span leads to a map $\phi\colon\calM_{k,n}\twoheadrightarrow\Gr(k,n)$ which is a principal 
$GL_k(\C)$-bundle.
This admits a (discontinuous) section given by putting any matrix in a fiber into reverse column reduced echelon
form. 
The set \defcolor{$\calX_\alpha$} of \demph{echelon} matrices with pivots in rows $\alpha$ is isomorphic to
$\C^{|\alpha|}$. 
Under $\phi$, the set $\calX_\alpha$ is isomorphic to a dense open subset of the Schubert variety
$X_\alpha\Id$.
For example, when $n=6$ and $k=3$,
here are the sets $\calX_\alpha$ for the brackets
$\alpha=[4,5,6]$, $[2,4,6]$, and $[2,3,5]$, respectively, where $x_{ij}$ indicates an indeterminate:
\[
   \left(\begin{matrix}x_{11}&x_{12}&x_{13}\\x_{21}&x_{22}&x_{23}\\x_{31}&x_{32}&x_{33}\\
                       1&0&0\\0&1&0\\0&0&1\end{matrix}\right)
  \qquad
   \left(\begin{matrix}x_{11}&x_{12}&x_{13}\\1&0&0\\0&x_{32}&x_{33}\\
                       0&1&0\\0&0&x_{53}\\0&0&1\end{matrix}\right)
  \qquad
   \left(\begin{matrix}x_{11}&x_{12}&x_{13}\\1&0&0\\0&1&0\\0&0&x_{43}\\
              0&0&1\\0&0&0\end{matrix}\right)
\]      

A set $\calY\subset\calM_{k,n}$ will be called \demph{Stiefel coordinates} for 
a subvariety $Y$ of $\Gr(k,n)$, if there is an invertible matrix
$M$ such that  $\phi(M\calY)$ is dense in $Y$ and the map $\phi\circ M\colon \calY\to Y$ is birational.
Thus $\calX_\alpha$ gives Stiefel coordinates for the Schubert variety
$X_\alpha\Id$ and also for $X_\alpha M$.
This definition allows the mild but useful ambiguity 
that for $M$ invertible, both $\calX_\alpha$ and $M\calX_\alpha$
 are Stiefel coordinates for both $X_\alpha\Id$ and for $X_\alpha M$.

Given a point $H\in\calM_{k,n}$, the condition that the $k$-plane $\phi(H)$ lies in $X_\alpha F$ may be
expressed in terms of the rank of augmented matrices,
 \begin{equation}\label{Eq:rank_conditions}
    \rank \left(\ H\ \mid\ F_{\alpha_i}\ \right)\ \leq\ k{+}\alpha_i{-}i 
     \qquad\mbox{for}\quad i=1,\dotsc,k\,.
 \end{equation}
Equivalently, for each $i=1,\dotsc,k$, all square $(k{+}\alpha_i{-}i{+}1)\times(k{+}\alpha_i{-}i{+}1)$ minors of the matrix 
$( H\mid F_{\alpha_i})$ vanish.
This gives
\[
   \sum_{i=1}^k \binom{n}{k{+}\alpha_i{-}i{+}1}\binom{k{+}\alpha_i}{k{+}\alpha_i{-}i{+}1}
\]
equations, which are polynomials in the entries of $H$ with coefficients depending upon $F$.
There are no minors when $\alpha_i=n{-}k{+}i$, and conditions are redundant 
if $\alpha_k = n$, or when $1{+}\alpha_i=\alpha_{i+1}$.
For example, when $k=4$, $n=8$, and $\alpha=[3,\defcolor{{\bf 4}},7,8]$,  the only 
meaningful condition in the definition~\eqref{Eq:X3478} of $X_{[3,4,7,8]}F$ is $\dim H\cap F_4\geq 2$, 
or equivalently $\rank(H\mid F_4)\leq 6$.
This is given by the vanishing of
the 64 non-maximal $7\times 7$ minors of the 
$8\times 8$ matrix $(H\mid F_4)$.

This discussion shows that we may model the intersection of a subset $Y\subset\Gr(k,n)$  with a collection of
Schubert varieties,  
\[
   Y\ \cap\ X_{\alpha^1} F^1\:\cap\: X_{\alpha^2} F^2\:\cap\: \dotsb\:\cap\: X_{\alpha^s} F^s\,,
\]
by first selecting a set $\calY\subset\calM_{k,n}$ of Stiefel coordinates for $Y$ and then generating the minors
imposing the rank conditions~\eqref{Eq:rank_conditions}, for each pair $(\alpha^i, F^i)$.

The Littlewood-Richardson Homotopy Algorithm (Algorithm~\ref{Alg:LRH} in Section~\ref{SS:allTogether}) takes as input 
two positive integers $k<n$ indicating the Grassmannian $\Gr(k,n)$, brackets $\alpha^1,\dotsc,\alpha^s$ representing a
Schubert problem on $\Gr(k,n)$, and general flags $F^1,\dotsc,F^s$ in $\C^n$.
Given these, it computes all the solutions to the corresponding instance~\eqref{Eq:Instance_of_Schubert_Problem}. 

\begin{theorem}\label{Th:Main}
 For any Schubert problem $(\alpha^1,\dotsc,\alpha^s)$ and general flags $F^1, \dotsc, F^s$, 
 the Littlewood-Richardson Homotopy Algorithm finds all points in the
 intersection~\eqref{Eq:Instance_of_Schubert_Problem}. 
\end{theorem}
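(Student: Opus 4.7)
The plan is to prove correctness by induction, guided precisely by the structure of the geometric Littlewood-Richardson rule recalled in Section~\ref{S:GLRR}. The base case is the two-bracket Schubert problem, where either $X_\alpha F\cap X_\beta M$ is empty (when $\beta_{k+1-i}\neq n{+}1-\alpha_i$ for some $i$), or it consists of the single explicit point given in~\eqref{Eq:trivSChProb}. In either case, no homotopy is required and the algorithm returns the correct answer directly from the Stiefel coordinates of $F$ and $M$.

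For the inductive step, I would read the geometric Littlewood-Richardson rule as providing, for each instance of the input problem, a one-parameter family of intersections obtained by specializing one flag (say $F^s$) toward a position that is generic with respect to a chosen coordinate subflag governed by the checker game. The general fiber $t\neq 0$ is the given intersection~\eqref{Eq:Instance_of_Schubert_Problem}, and the special fiber $t=0$ is, by the rule, a union of intersections of the form $Y\cap X_{\alpha^1}F^1\cap\dotsb\cap X_{\alpha^{s-1}}F^{s-1}\cap X_{\alpha'}F'$, where $Y\subset\Gr(k,n)$ is a smaller Schubert variety and $(\alpha^1,\dotsc,\alpha^{s-1},\alpha')$ is a simpler Schubert problem in $Y$. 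By induction on the combinatorial complexity (for instance, the length of the checker game), the algorithm correctly solves each of these simpler problems, giving solutions at $t=0$. The homotopy then tracks these back to $t=1$; the rank-condition equations from Section~\ref{S:SC} remain the same along the deformation, while only the Stiefel coordinates for the changing ambient space $Y_t$ vary with $t$, which is the ``novel feature'' highlighted in the introduction.

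Two things must be verified to complete the induction. First, the family must be \emph{flat} with reduced and multiplicity-free special fiber, so that the number of solutions is preserved and each component at $t=0$ contributes exactly one endpoint. This is precisely what the geometric Littlewood-Richardson rule establishes; I would quote the relevant statement from Section~\ref{S:GLRR} and check that the Stiefel parametrizations chosen in the algorithm realize this flat family on the level of equations. Second, the homotopy must be \emph{optimal} in the sense of the introduction: no solution path may diverge or become singular for $t\in(0,1]$. Generic choice of the input flags $F^1,\dotsc,F^s$ places us in the transversal locus of Kleiman's theorem, and combined with flatness this forces each path to be smooth and bounded, so that standard numerical continuation theory tracks every path to its unique endpoint at $t=1$.

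The main obstacle is this second point, and in particular the bookkeeping it demands in local coordinates. As the ambient variety $Y_t$ changes along the homotopy, one must exhibit explicit Stiefel coordinates $\calY_t\subset\calM_{k,n}$ in which the equations~\eqref{Eq:rank_conditions} depend polynomially on $t$ and the $\phi\circ M_t\colon\calY_t\to Y_t$ stay birational for every $t$ encountered. I would handle this by invoking the new minimal rank-condition formulation from Section~\ref{S:SC} and combining it with the explicit coordinate updates described in Section~\ref{S:LRH}; the resulting family is then visibly a well-posed numerical continuation problem, and the inductive hypothesis plus Bertini-type genericity closes the argument.
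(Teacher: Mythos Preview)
Your overall architecture---induction along the tournament, base case~\eqref{Eq:trivSChProb}, flatness supplied by the geometric Littlewood-Richardson rule, genericity via Kleiman---is the same as the paper's. But several details of how you describe the induction do not match the algorithm and would need correcting before the argument goes through.

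First, the flag being specialized is not $F^s$. At level~$\ell{-}2$ of the tournament the pair of conditions $(X_\alpha F,\,X_{\beta^{\ell-1}}F^{\ell-1})$ is being merged: the moving flag $M$ (which plays the role of $F^{\ell-1}$) degenerates toward the fixed flag $F$, and the special fiber is a union over $\gamma$ of intersections
\[
  X_\gamma F\ \cap\ X_{\beta^\ell}F^\ell\ \cap\ \dotsb\ \cap\ X_{\beta^s}F^s\,,
\]
with one \emph{fewer} Schubert condition and no extra ambient factor~$Y$. Your description of the special fiber as ``$Y\cap X_{\alpha^1}F^1\cap\dotsb\cap X_{\alpha^{s-1}}F^{s-1}\cap X_{\alpha'}F'$'' with a separate smaller Schubert variety $Y$ does not match this structure.

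Second, the paper's induction is finer than yours: it is node-by-node along the checkerboard tournament $\calT$, not game-by-game. Each step passes between adjacent checkerboards $\cbd$ and $\cbd'$ through an \emph{intermediate} Schubert problem~\eqref{Eq:Intermediate_def} involving a checkerboard variety $Y_{\cbd}(F,M)$, and the single-step homotopy is Algorithm~\ref{Alg:core}. The technical content you defer (``explicit Stiefel coordinates $\calY_t$ in which the equations depend polynomially on $t$ and the parametrizations stay birational'') is exactly what Lemmas~\ref{L:St_family} and~\ref{L:St_family_III} establish, case-by-case over the moves in Table~\ref{t:move}; these are not consequences of the efficient-equations formulation of Theorem~\ref{Thm:efficient} but require the separate coordinate constructions of Cases~I--III. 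Finally, Kleiman is invoked not only for transversality but also to ensure that no solution lies in the complement of the Stiefel chart $\calY_{\cbd}$ at any node, which is what justifies working in local coordinates throughout.
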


The proof of Theorem~\ref{Th:Main} is included in the proof of correctness 
of the Littlewood-Richardson Homotopy Algorithm.

\subsection{Efficient representation of Schubert problems}
We formulate membership of a $4$-plane in  $X_{[3,4,7,8]} F$ in terms of the Stiefel manifold $\calM_{4,8}$.
The condition~\eqref{Eq:rank_conditions}  on augmented matrices 
is $\rank(H \mid F_4)\leq 6$, where the $4$-plane $H$ is the column space of a $8\times 4$ matrix of
indeterminates and we write the (constant) entries of the $8\times 4$ matrix $F$ as $*$s, 
\[
   (H \mid F_4) \ =\  
  \left(\begin{array}{cccl|rccc}
    x_{11} & x_{12} & x_{13} & x_{14}\mbox{\ } &\mbox{\ }* & * & * & * \\
    x_{21} & x_{22} & x_{23} & x_{24} & * & * & * & * \\
    x_{31} & x_{32} & x_{33} & x_{34} & * & * & * & * \\
    x_{41} & x_{42} & x_{43} & x_{44} & * & * & * & * \\
    x_{51} & x_{52} & x_{53} & x_{54} & * & * & * & * \\
    x_{61} & x_{62} & x_{63} & x_{64} & * & * & * & * \\
    x_{71} & x_{72} & x_{73} & x_{74} & * & * & * & * \\
    x_{81} & x_{82} & x_{83} & x_{84} & * & * & * & * 
  \end{array}\right)\ .
\]
The rank condition is given by the vanishing of the $64$ 
non-maximal minors of $(H\mid F_4)$
obtained by deleting one row and one column.
Half of these equations are homogeneous cubics and half are homogeneous quartics.
The ideal of $X_{[3,4,7,8]}F$ in $\calM_{4,8}$ is generated by only 16 cubic minors, but it is not clear
{\it a priori} which 16 suffice. 
We present another formulation of this Schubert variety that involves only 17 linearly independent quartics.

The Pl\"ucker embedding $\Gr(k,n)\hookrightarrow\P(\wedge^k\C^n)$ is induced by the map
$\Mat_{n\times k}(\C)\to \wedge^k\C^n$ given by the $\binom{n}{k}$ maximal minors of a matrix 
$H=(h_{i,j})\in\Mat_{n\times k}(\C)$
\[
    H\ \longmapsto\ \Bigl( p_\alpha(H)\;\mid\; \alpha\in\tbinom{[n]}{k}\Bigr)\ \in\
    {\textstyle \bigwedge^k}\:\C^n\,.
\]
Here, $\defcolor{p_\alpha(H)}:=\det(h_{\alpha_i,j})_{i,j=1}^k$ is
the determinant of the square submatrix consisting of the rows indexed by $\alpha$ in $H$.
These minors $p_\alpha(H)$ are the \demph{Pl\"ucker coordinates} of $H$.
The image is $\Gr(k,n)$ and it is cut out by the quadratic Pl\"ucker relations~\cite[\S9.1, Lemma~1]{Fu97}.

The Schubert variety $X_\alpha I$ is cut out from $\Gr(k,n)$
by a subset of Pl\"ucker coordinates.
Specifically, $H\in X_\alpha I$ if and only if $p_\beta(H)=0$ 
for all $\beta\in\binom{[n]}{k}$ with $\beta\not\leq\alpha$.
This may be seen as follows.
Given a general matrix $H\in X_\alpha I$, the rank of the square submatrix
formed by its rows
$\beta_1,\dotsc,\beta_k$ is $k$ unless $\beta_i<\alpha_i$ for some $i$.
This uses the partial order on  the index set $\binom{[n]}{k}$ of brackets,
defined by $\alpha\defcolor{\leq}\beta\Longleftrightarrow\alpha_i\leq\beta_i$
for $i=1,\dotsc,k$.

\begin{example}
   When $n=8$, $k=4$, and $\alpha=[3,4,7,8]$, there are 17 brackets $\beta$ with $\beta\not\leq\alpha$:
\begin{center}
   ${\displaystyle [5,6,7,8]\,,\ 
   [4,6,7,8]\,,\ 
   [3,6,7,8]\,,\ 
   [4,5,7,8]\,,\ 
   [2,6,7,8]\,,\ 
   [3,5,7,8]\,,\ 
   [4,5,6,8]\,,}$\\ 
   $\displaystyle{ [1,6,7,8]\,,\ 
   [2,5,7,8]\,,\ 
   [3,5,6,8]\,,\ 
   [4,5,6,7]\,,\ 
   [1,5,7,8]\,,\ 
   [2,5,6,8]\,,\ 
   [3,5,6,7]\,,}$\\
   ${\displaystyle 
   [1,5,6,8]\,,\ 
   [2,5,6,7]\,,\ 
   [1,5,6,7]\,.}$\makebox[0.1pt][l]{\hspace{139pt}$\diamond$}
\end{center}
\end{example}

Observe that $H\in X_\alpha F$ if and only if 
$F^{-1}H\in X_\alpha I$ if and only if $p_\beta(F^{-1}H)=0$ for all $\beta\not\leq\alpha$.
Using the Cauchy-Binet formula, we can write
\[
    p_\beta(F^{-1}H)\ =\ \sum_{\gamma\in\binom{[n]}{k}} p_{\beta,\gamma}(F^{-1}) p_\gamma(H)\,,
\]
where $p_{\beta,\gamma}(F^{-1}):= \det( (F^{-1})_{\beta_i,\gamma_j})_{i,j=1}^k$ is the $(\beta,\gamma)$-th entry in
the matrix $\wedge^k(F^{-1})$.
We summarize this discussion with the following theorem.

\begin{theorem}[\bf Efficient equations for $Y\cap X_\alpha F$]\label{Thm:efficient}
Let $\calY$ be Stiefel coordinates for $Y\subset\Gr(k,n)$ and compute the Pl\"ucker vector
$\defcolor{P(\calY)}:=(p_\beta(\calY)\mid \beta\in\binom{[n]}{k})$ for $\calY$.
Compute the rectangular matrix 
$\defcolor{P(\alpha)(F^{-1})}:=(p_{\beta,\gamma}(F^{-1})\mid \beta\not\leq\alpha,\,\gamma\in\binom{[n]}{k})$.
The entries in the matrix-vector product $P(\alpha)(F^{-1})\cdot P(\calY)$ cut out 
$\phi(\calY)\cap X_\alpha F$ from $\phi(\calY)$.
\end{theorem}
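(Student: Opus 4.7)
The plan is to reduce the theorem to the case of the identity flag via left multiplication by $F^{-1}$, and then combine the Pl\"ucker description of $X_\alpha\Id$ recorded just before the theorem with the Cauchy--Binet formula.

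First I would verify that $\phi(H)\in X_\alpha F$ if and only if $\phi(F^{-1}H)\in X_\alpha\Id$. This is immediate from the rank conditions~\eqref{Eq:rank_conditions}, since left multiplication by $F^{-1}$ is an isomorphism sending the flag $F$ to $\Id$ and it preserves the rank of each augmented matrix $(H\mid F_{\alpha_i})$ after multiplying on the left by $F^{-1}$. Next I invoke the Pl\"ucker characterization stated in the paragraph preceding the theorem: a point of $\Gr(k,n)$ lies in $X_\alpha\Id$ exactly when every Pl\"ucker coordinate $p_\beta$ with $\beta\not\leq\alpha$ vanishes. Combining these two facts yields
\[
   \phi(H)\in X_\alpha F\ \Longleftrightarrow\ p_\beta(F^{-1}H)\ =\ 0\ \text{ for every } \beta\not\leq\alpha\,.
\]

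The substantive step is the Cauchy--Binet expansion. For each $\beta$, the minor $p_\beta(F^{-1}H)$ is the determinant of the $k\times k$ product formed by the $k$ rows of $F^{-1}$ indexed by $\beta$ times $H$, so Cauchy--Binet gives
\[
   p_\beta(F^{-1}H)\ =\ \sum_{\gamma\in\binom{[n]}{k}} p_{\beta,\gamma}(F^{-1})\,p_\gamma(H)\,,
\]
which is precisely the $\beta$-entry of the matrix-vector product $P(\alpha)(F^{-1})\cdot P(H)$. Letting $H$ vary over $\calY$ replaces $p_\gamma(H)$ by the polynomial $p_\gamma(\calY)$, so the scalar equations obtained by setting every entry of $P(\alpha)(F^{-1})\cdot P(\calY)$ to zero cut out exactly $\phi(\calY)\cap X_\alpha F$ inside $\phi(\calY)$.

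The main obstacle here is purely bookkeeping, namely keeping rows versus columns straight in Cauchy--Binet and tracking the passage from matrices in $\calM_{k,n}$ to their images in $\Gr(k,n)$ under $\phi$. Once those conventions are pinned down, the proof reduces to a one-line application of Cauchy--Binet to the already-stated Pl\"ucker description of $X_\alpha\Id$; no genericity hypothesis, dimension count, or ideal-theoretic analysis is needed, and the formula produces exactly the $17$ quartics anticipated by the worked example with $\alpha=[3,4,7,8]$.
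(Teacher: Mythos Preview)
Your proposal is correct and follows essentially the same argument as the paper: the theorem is stated as a summary of the preceding discussion, which consists precisely of the reduction $H\in X_\alpha F\Leftrightarrow F^{-1}H\in X_\alpha\Id\Leftrightarrow p_\beta(F^{-1}H)=0$ for $\beta\not\leq\alpha$, followed by the Cauchy--Binet expansion of $p_\beta(F^{-1}H)$. Your write-up simply makes explicit the routine verifications that the paper leaves to the reader.
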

\begin{remark}
 This method is even more efficient for the intersections of several Schubert
 varieties, as we only need to compute $P(\calY)$ once. 
 \hfill$\diamond$
\end{remark}

\begin{remark}
 When this improvement was first implemented in our software, it resulted in speedups of several to 60-fold.
 For instance, for $\alpha=[3,4,7,8]$, computing the problem $(\alpha,\alpha,\alpha,\alpha)$ with six solutions  
 went from 20 minutes to 20 seconds.
 It is implemented in symbolic software used to study Galois groups in Schubert calculus~\cite{GIVIX}.
 \hfill$\diamond$
\end{remark}

\subsection{Numerical homotopy continuation}

A numerical homotopy continuation algorithm computes solutions to a system of polynomial equations by following 
known solutions to a different set of equations along a deformation (homotopy) between the two systems
using predictor-corrector methods.

Suppose that we want to compute the solutions to a system
 \begin{equation}\label{Eq:gen_sys}
    f_1(x_1,\dotsc,x_m)\ =\ 
    f_2(x_1,\dotsc,x_m)\ =\ \dotsb \ =\ 
    f_M(x_1,\dotsc,x_m)\ =\ 0
 \end{equation}
of polynomial equations.
A \demph{homotopy} for~\eqref{Eq:gen_sys} is a one-parameter family of equations $\mathcal{H}(x;t)=0$ whose solutions at
$t=0$ are known and whose solutions at $t=1$ include those of~\eqref{Eq:gen_sys}.
Furthermore, restricting $t$ to the interval $[0,1]$ defines paths in $\C^m$ that connect the 
solutions of~\eqref{Eq:gen_sys} from $t=1$ to known solutions at $t=0$.

For such a homotopy, standard predictor-corrector methods 
are used to numerically trace the known solutions at $t=0$ to obtain solutions
to~\eqref{Eq:gen_sys} at $t=1$ (see~\cite{Mor87} for more details).
The homotopy is \demph{optimal} when every solution at $t=0$ is connected 
to a unique solution to~\eqref{Eq:gen_sys} at $t=1$ along a path. 

This procedure may be iterated, connecting one homotopy to another 
to solve~\eqref{Eq:gen_sys} from known solutions
to another system in two or more steps.
The Pieri homotopy is such an optimal homotopy that used up to $k(n{-}k)-2$ 
steps to solve special Schubert problems~\cite{HSS98}.
The Littlewood-Richardson homotopy (Algorithm~\ref{Alg:LRH} 
in Section~\ref{SS:allTogether}) is also an optimal
homotopy which solves more general Schubert problems on Grassmannians.

\section{The Geometric Littlewood-Richardson Rule} \label{S:GLRR}

The Littlewood-Richardson homotopy algorithm is based on 
the geometric Littlewood-Richardson rule~\cite{Va06a}.
It consists of a sequence of degenerations which successively 
transform an intersection $X_\alpha F\cap X_\beta M$ of
Schubert varieties when $F$ and $M$ are general into a union 
of Schubert varieties $X_\gamma F$ where
$\|\gamma\|=\|\alpha\|+\|\beta\|$. 

These degenerations are encoded in the combinatorial checkerboard game,
described in Section~2 of~\cite{Va06a}.
Subsection 2.18 of {\it loc.~cit.}~explains how 
these are combined into a checkerboard tournament that encodes the process of
resolving a given Schubert problem. 
This checkerboard tournament forms the combinatorial backbone of the Littlewood-Richardson homotopy.

The intermediate components of the degenerations of intersections $X_\alpha F\cap X_\beta M$ are called checkerboard
varieties; these are defined in Subsection~\ref{SS:CBV}, where we also describe Stiefel coordinates for them.
Subsection~\ref{SS:checkers} describes the checkerboard game and explains how to combine 
several of them to get a checkerboard tournament.

\subsection{Checkerboard varieties}\label{SS:CBV}

We summarize salient features of~\cite[Sec.~2]{Va06a}.
Given brackets $\alpha$ and $\beta$, 
the geometric Littlewood-Richardson rule is a sequence of $\binom{n}{2}{+}1$ 
families of subvarieties of $\Gr(k,n)$ parameterized by pairs of flags $(F,M)$ in particular relative positions.
The most general family is parameterized by pairs of flags in general position with the fiber over $(F,M)$ being
the intersection of Schubert varieties $X_\alpha F\cap X_\beta M$.
In the least general family $M=F$ and the fiber over $(F,F)$ is a
union of Schubert varieties $X_\gamma F$ where $\|\gamma\|=\|\alpha\|+\|\beta\|$.
In each intermediate family, the pair of flags $(F,M)$ has a fixed non-general relative position 
and each fiber is a union of
certain checkerboard varieties. These families fit together pairwise into $\binom{n}{2}$ families, 
transforming the intersection  $X_\alpha F\cap X_\beta M$ into a union of Schubert varieties.

These $\binom{n}{2}+1$ families have the same base for any two brackets---each consists of all pairs $(F,M)$ of flags
having a fixed relative position encoded by a permutation $\pi$, where
\[
   \dim(M_i\cap F_j)\ =\ \#\{\ell\leq j \mid \pi(\ell)\leq i\}\,. 
\]
We encode the relative position between $F$ and $M$ in a \demph{permutation array}, which is an $n\times n$ array of boxes with
one black checker $\bullet$ in each row and column.
We will refer to a permutation array by the corresponding permutation $\pi$, defined 
by the positions of the black checkers.
For example, the permutation $356421$ (given in one-line notation) corresponds to the following permutation array.
 \begin{equation}\label{Eq:Perm_Array}
    \raisebox{-25pt}{\includegraphics{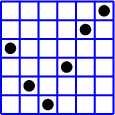}}
 \end{equation}

An ordered basis $\bfm_1,\dotsc,\bfm_n$ for $\C^n$ and a permutation array $\pi$ 
define flags $F$ and $M$ as follows.
Identifying the checker in row $i$ with $\bfm_i$, the $i$-plane $M_i$ is the span of the checkers in the first $i$ rows 
and the $j$-plane $F_j$ is the span of the checkers in the first $j$ columns.  
For example, for the permutation array~\eqref{Eq:Perm_Array},
we have $M_3 = \langle \bfm_1,\bfm_2,\bfm_3 \rangle$, while 
$F_3=\langle \bfm_3,\bfm_5,\bfm_6\rangle$.

A \demph{checkerboard} on a permutation array $\pi$ is a placement $\cbd$ of $k$ red checkers in $\pi$ such that 
the red checkers are in distinct rows and columns, and any subset of $j$ red checkers
has at least $j$ black checkers to its northwest $(\nwarrow)$.
Suppose that $\cbd$ is a checkerboard on a permutation array $\pi$ and $(F,M)$ is a pair of flags having relative
position $\pi$ given by an ordered basis $\bfm_1,\dotsc,\bfm_n$ as above.
For each subset $S$ of red checkers, let \defcolor{$S(F,M)$} be the subspace of $\C^n$ spanned by the black
checkers northwest of $S$.

\begin{definition}\label{De:CheckerboardVariety}
 The \demph{checkerboard variety} $\defcolor{Y_{\cbd}(F,M)}\subset\Gr(k,n)$ consists of all $k$-planes $H$
 such that $\dim H\cap S(F,M)\geq \#S$, for all subsets $S$ of red checkers.
\end{definition}

For the checkerboard $\cbd$ below, the checkerboard variety $Y_{\cbd}(F,M)$  is
\[
    \raisebox{-24pt}{\includegraphics{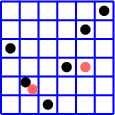}}\qquad
    \begin{array}{rcl}Y_{\cbd}(F,M)&:=&\{H\in\Gr(2,6)\mid \dim H\cap\langle \bfm_3,\bfm_5\rangle\geq 1\,,\vspace{1pt}
      \\ && \mbox{\qquad \ } \dim H\cap\langle \bfm_2,\bfm_3,\bfm_4\rangle\geq 1\,,\vspace{1pt}
      \mbox{ and }\\ && \mbox{\qquad \ } H\subset \langle \bfm_2,\bfm_3,\bfm_4,\bfm_5\rangle\}\,.
    \end{array}
\]
In~\cite{Va06a}, the checkerboard variety $Y_{\cbd}(F,M)$ is called a closed two-flag Schubert variety.
In Lemma~2.6 {\it loc.\ cit.}, an open subset of $Y_{\cbd}(F,M)$ is described as a subset of a tower of
projective bundles.
This is equivalent to the following definition of Stiefel coordinates for a checkerboard variety.

\begin{definition}\label{Def:Stiefel_Coords}
 Order the red checkers from top to bottom.  
 The checkerboard variety $Y_{\cbd}(F,M)$ has Stiefel coordinates given by a set
 $\defcolor{\calY_{\cbd}}=(y_{i,j})$ of reduced echelon matrices as follows. 
The entry $y_{i,j}$ is 0 when the black checker in row $i$ is not northwest of the $j$th red checker, or
if it is northwest and shares its square with a different red checker;
the entry $y_{i,j}$ is a 1 if the $j$th red checker is in row $i$, and otherwise $y_{i,j}$ is an indeterminate.
\end{definition}
 
 The set $\phi(M\calY_{\cbd})$ is dense in the checkerboard variety $Y_{\cbd}(F,M)$. 
 A $k$-plane $H\in\phi(M\calY_{\cbd})$ has a basis $\bfh_1,\dotsc,\bfh_k$ where the vector
 \[
   \bfh_j\ =\ \sum_{i=1}^n y_{i,j}\bfm_i\,,
 \]
corresponds to column $j$ of $M\calY_{\cbd}$.

 By Lemma~\ref{L:redPositions} below, if there is a red checker northwest of red checker $j$, then it lies in the
 square of some (say the $i$th) black checker.
 We may use the column of this northwest red checker to reduce the column of the $j$th red checker in $\calY_{\cbd}$ so that 
 the entry $y_{i,j}$ vanishes.
 Thus this entry must be zero for $\calY_{\cbd}$ to consist of echelon matrices.
 

\begin{example}\label{Ex:CBVar}
Figure~\ref{F:CheckerBoardVariety} shows a checkerboard $\cbd$ and its 
\begin{figure}[htb]
\begin{picture}(405,205)(0,-10)
\put(0,8){
  \begin{picture}(169,169)
   \put(1,1){\includegraphics{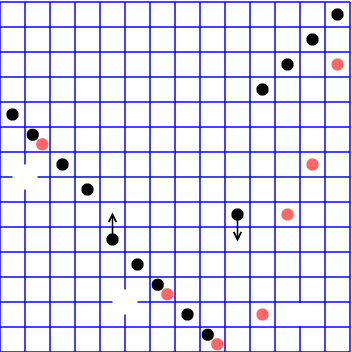}}
   \put(172,27){\Red$R$} \put(172,63){\Red$r$}
   \put(122.5,147.5){$A$}
   \put(  8, 80){$B$}
   \put(122.5, 86.5){$C$}
   \put(158, 62.5){$D$}
   \put( 56.5, 21){$E$}
   \put(140.5, 15){$F$}
  \end{picture}
}
%
%
\put(200,90.5){\TEN
$\left[\begin{matrix}
    y_{1,1} &\cdot &\cdot &\cdot &\cdot &\cdot  &\cdot\\
    y_{2,1} &\cdot &y_{2,3} &\cdot &\cdot &\cdot &\cdot\\ 
    1     &\cdot &y_{3,3} &y_{3,4} &\cdot &\cdot &\cdot\\
    \cdot &\cdot &y_{4,3} &y_{4,4} &\cdot &y_{4,6}&\cdot\\ 
    \cdot &y_{5,2} &y_{5,3} &y_{5,4} &y_{5,5} & y_{5,6}&y_{5,7}\\
    \cdot & 1    & 0    & 0    & 0    & 0    & 0    \\
    \cdot &\cdot & 1    &y_{9,4} &y_{7,5} &y_{7,6} &y_{7,7}\\
    \cdot &\cdot &\cdot &y_{8,4} &y_{8,5}&y_{8,6}  &y_{8,7}\\
    \cdot &\cdot &\cdot & 1    &\cdot &y_{9,6}  &\cdot\\
    \cdot &\cdot &\cdot &\cdot &y_{10,5}&y_{10,6} &y_{10,7}\\
    \cdot &\cdot &\cdot &\cdot &y_{11,5}&y_{11,6}&y_{11,7}\\
    \cdot &\cdot &\cdot &\cdot & 1     & 0    & 0    \\
    \cdot &\cdot &\cdot &\cdot &\cdot  &1     &y_{13,7}\\
    \cdot &\cdot &\cdot &\cdot &\cdot  &\cdot  &1 
\end{matrix}\right]$}
\end{picture}
\caption{Stiefel coordinates corresponding to a checkerboard.}
\label{F:CheckerBoardVariety}
\end{figure}
Stiefel coordinates $\calY_{\cbd}$ when $n=14$ and $k=7$, with permutation array 
$\pi=(6,7,8,9,11,12,13,14,10,5,4,3,2,1)$.
The entries $0$ are forced by the requirement that the matrix be reduced echelon.
The entries $\cdot$ are also $0$ and they indicate that the black checker is not northwest of
the corresponding red checker.
The letters $A,\dotsc,F,r$, and $R$ and the arrows will be explained later.
\hfill $\diamond$
\end{example}

\subsection{The checkerboard game}\label{SS:checkers}

The steps in the geometric Littlewood-Richardson rule, the deformations and degenerations of 
$X_\alpha F\cap X_\beta M$, and of subsequent checkerboard varieties, are all encoded in the 
combinatorial checkerboard game. 
We discuss its salient features, following~\cite[\S\S 2.9--2.19]{Va06a}.

The checkerboard game is a movement of black checkers that encodes the specialization of a pair $(F,M)$ of general flags to the pair $(F,F)$ in special position.
The movement of the black checkers is a bubble sort beginning with the permutation $\omega_0$, where
$\omega_0(i)=n{+}1{-}i$, so that the black checkers will lie on the anti-diagonal.
In the game, the black checkers remain in their respective columns, changing only rows.
The first move interchanges the rows of the lowest (leftmost) two checkers.

For subsequent moves, note that the black checkers of a permutation $\pi$ in mid-sort will be in one of
four regions, illustrated in Figure~\ref{F:risefall}: 
(A) the upper right portion of the anti-diagonal, 
(B) along a diagonal starting in the first column at the row below (A), 
(E) along a diagonal starting one column and two rows after (B), or
there will be a solitary checker (D) in the column between (A) and (E) and in the row between (B) and (E).
If there is no column between the checkers in (A) and those along a diagonal, then consider that diagonal as (E), that (B) is
empty, and the solitary checker (D) is the last checker in (A).
We call the solitary checker (D) the \demph{descending checker} and the top checker in (E) 
the \demph{ascending checker}. 
When $n=4$, there are $7=\binom{4}{2}{+}1$ permutation arrays in the bubble sort.
 \begin{equation}\label{Eq:seven}
  \raisebox{-15pt}{\includegraphics{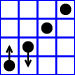}\qquad
   \includegraphics{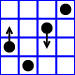}\qquad
   \includegraphics{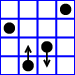}\qquad
   \includegraphics{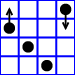}\qquad
   \includegraphics{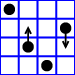}\qquad
   \includegraphics{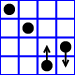}\qquad
   \includegraphics{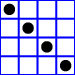}}
 \end{equation}

The subsequent permutation array is obtained by interchanging the rows of the descending and
ascending checkers. 
Call the row of the descending checker the \demph{critical row} and the diagonal (E) the \demph{critical diagonal}. 
See Figure~\ref{F:risefall}.
\begin{figure}[htb]
  \begin{picture}(272,74)(-35,-2)
   \put(80,-2){\includegraphics{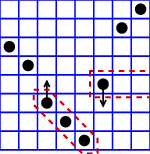}}
   \put(158,29){critical row (D)}
   \put( -35,11){critical diagonal (E)}
   \put( 70,13.5){\vector(1,0){28}}
   \put(154,60){(A)}
   \put( 59,42){(B)}
  \end{picture}
\caption{Critical row and critical diagonal.} 
\label{F:risefall}
\end{figure}

The checkerboard game also constructs a tree with checkerboards as nodes.
This tree is a ranked poset with $\binom{n}{2}+1$ ranks corresponding to the underlying permutation arrays.
Its root encodes the intersection $X_\alpha F\cap X_\beta M$ as a checkerboard for the permutation array $\omega_0$, placing red
checkers in positions $(\beta_{k{+}1{-}i},\alpha_i)$ for $i=1,\dotsc,k$.
When $n=6$ and $k=3$ with $\alpha=[2,4,6]$ and $\beta=[3,4,6]$, we have the following checkerboard $\cbd$ and
Stiefel coordinates $\calY_{\cbd}$ for $X_\alpha F\cap X_\beta M$:
\[
   \raisebox{-30pt}{\includegraphics{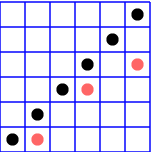}}
   \qquad
  %
  %
   {\TEN\left[\begin{matrix}
      \cdot &\cdot & y_{13}\\
      \cdot &\cdot & y_{23}\\
      \cdot & y_{32} &   1\\
      \cdot &  1   &\cdot\\
      y_{51} &\cdot &\cdot\\
        1   &\cdot &\cdot
   \end{matrix}\right]}\ .
\]
If for some $i$, $\beta_{k+1-i}+\alpha_i<n$, then $X_\alpha F\cap X_\beta M=\emptyset$ and there is no 
checkerboard game.

Each node in this tree has one or two children according to which of nine cases it is in. 
These cases are determined by two questions, each of which has three answers. 
\begin{enumerate}
 \item[]\hspace{-20pt} Where is the top red checker in the critical diagonal $(E)$?
  \begin{itemize}
   \item[$(0)$] In the square of the ascending black checker.
   \item[$(1)$] Elsewhere in the critical diagonal.
   \item[$(2)$] There is no red checker in the critical diagonal.
  \end{itemize}
 \item[]\hspace{-20pt}  Where is the red checker in the critical row $(D)$?
  \begin{itemize}
   \item[$(0)$] In the square of the descending black checker.
   \item[$(1)$] Elsewhere in the critical row.
   \item[$(2)$] There is no red checker in the critical row.
  \end{itemize}
\end{enumerate}

Table~\ref{t:move} shows the movement of the checkers in these nine cases.
The rows correspond to the first question and the columns to the second question.
Only the relevant part of each checkerboard is shown.
\begin{table}[htb]
\caption{Movement of red checkers.} 
\label{t:move}
\begin{center}
 \begin{tabular}{|c|c|c|c|}\hline
 & $0$&$1$&$2$\\\hline
 \raisebox{15pt}{$0$}
 &\raisebox{5.5pt}{\includegraphics{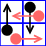}}& \includegraphics{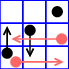}\rule{0pt}{39pt}&
 \raisebox{5.5pt}{\includegraphics{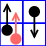}}\\\hline
 \raisebox{19.5pt}{$1$}
 & \raisebox{5.5pt}{\includegraphics{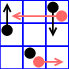}} & \includegraphics{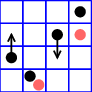}\rule{0pt}{50pt}
   \raisebox{18pt}{\ or\ }  \includegraphics{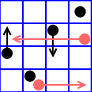} &
   \raisebox{5.5pt}{\includegraphics{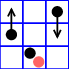}} \\\hline
 \raisebox{15pt}{$2$}&
 \raisebox{5.5pt}{\includegraphics{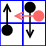}} & \includegraphics{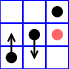}&
 \raisebox{5.5pt}{\includegraphics{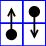}}\rule{0pt}{39pt}\\\hline
\end{tabular}
\end{center}
\end{table}

In case $(1,1)$ there are two possibilities, referred to as \demph{stay} or \demph{swap}, for in one the red
checkers remain in place, while in the other they swap columns. 
The swap occurs only if there are no other
red checkers in the rectangle between the two, called \demph{blockers}.
Figure~\ref{F:blocker} shows a blocker.
\begin{figure}[htb]
\[
 \begin{picture}(270,47.5)(-40,5)
   \put(70,0){\includegraphics{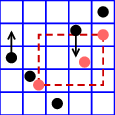}}
   \put(145,36){red checker in critical row}
   \put(143,38.5){\vector(-1,0){18}}

   \put(-70,14){\parbox{128pt}{top red checker elsewhere  in critical diagonal}}
   \put(58,14){\vector(1,0){22}}

   \put(145,23){blocker}\put(143,26){\vector(-1,0){28}}
 \end{picture}
\]
\caption{A blocker.} 
\label{F:blocker}
\end{figure}

A red checker is in region $A$, $B$, or $E$ if both its row and column contain
black checkers in the corresponding region.
Checkers in regions $C$, $D$, or $F$ lie  in the row of some black checker that is in
region $B$, is descending, or is in region $E$, respectively, and they lie in a column of a
black checker in $A$. 
It is helpful to refer to Figure~\ref{F:CheckerBoardVariety}.

\begin{lemma}\label{L:redPositions}
 In a checkerboard $\cbd$, each red checker strictly to the left of the column of the descending checker lies in
 the square of some black checker in region $B$ or $E$.
 The other red checkers are arranged southwest to northeast in regions $F$, $D$, $C$, and $A$.
 In particular, if one red checker is northwest of another, then the first lies in the square of a black checker in
 region $B$ or $E$.
\end{lemma}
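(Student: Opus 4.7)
The plan is to exploit the rigid combinatorial decomposition of the black checkers together with the northwest (Hall) condition applied to carefully chosen subsets of red checkers. Set $a=|A|$, $b=|B|$, $e=|E|$, so that $r_D = a+b+1$ and $c_D = b+e+1$. The A checker in row $i \in \{1,\dotsc,a\}$ sits at column $n+1-i$; the B checker in column $c \in \{1,\dotsc,b\}$ sits at row $a+c$; the E checker in column $c \in \{b+1,\dotsc,c_D-1\}$ sits at row $r_D + c - b$; and D sits at $(r_D,c_D)$. Consequently every column $c < c_D$ contains a unique black checker, whose row I denote $\rho(c)$; here $\rho$ is strictly increasing on each of $\{1,\dotsc,b\}$ and $\{b+1,\dotsc,c_D-1\}$.

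For the first sentence of the lemma, fix a red $R=(r,c)$ with $c < c_D$. I would let $S$ be the set of all reds in columns $\leq c$ and invoke the Hall condition on $S$: the $|S|$ required black checkers must lie in the rectangle $[1,r_{\min}] \times [1,c_{\min}]$, where $r_{\min},c_{\min}$ are the componentwise minima over $S$. Using the explicit description of $\rho$ and its monotonicity, one shows that this rectangle contains exactly $|S|$ black checkers precisely when each red in $S$ sits in row $\rho(c')$ for its own column $c'$; any deviation loses at least one checker from the rectangle. Hence $r=\rho(c)$, so $R$ sits on the B or E checker in its column.

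For the second sentence, a red at $(r,c)$ with $c \geq c_D$ has its column in $\{c_D\}$ or in the A-column block. A parallel Hall count, now involving the A checker in column $c$ and the B/E checkers in columns $\leq c_D - 1$ whose row falls below $r$, constrains $r$ to lie in exactly one of the four row classes A, B, D, E; this places the red in one of the regions A, C, D, or F. The southwest-to-northeast arrangement then follows because any strict NW pair of reds in columns $\geq c_D$ would push the A checker in the northeastern red's column outside the joint NW rectangle, producing a Hall deficiency. The ``in particular'' clause is immediate: if $R_1$ is NW of $R_2$, then $c_{R_1} < c_D$ by the SW-to-NE property just established, and the first sentence places $R_1$ on a B or E black.

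The main obstacle is the Hall bookkeeping in the first step when $S$ spans both B and E columns (i.e., $c > b$): one must verify tightness across the B/E divide, where E begins exactly two rows below B and one column to the right. Any red placed off its canonical row either falls into the forbidden gap between the B and E row ranges or outside some neighbor's NW rectangle, and the count then loses at least one available black checker—but showing this formally requires tracking the interaction between the two monotone pieces of $\rho$ with care.
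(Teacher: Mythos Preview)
Your approach has a genuine gap: the lemma does \emph{not} follow from the Hall condition alone, so no direct argument from the northwest condition can succeed. The paper's proof is a two-line induction along the game---the configuration holds at the initial board $\omega_0$ and is preserved by each of the moves in Table~\ref{t:move}---and this inductive structure is essential, not merely convenient.

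Here is a concrete counterexample to the statement you are trying to prove. Take $n=4$, the mid-sort permutation $\pi=2431$, with black checkers at $(2,1),(4,2),(3,3),(1,4)$; then $A=\{(1,4)\}$, $B=\{(2,1)\}$, $E=\{(4,2)\}$, $D=(3,3)$, so $c_D=3$. Place $k=2$ red checkers at $(3,1)$ and $(4,4)$. Every subset $S$ of reds has at least $|S|$ black checkers to its northwest (for the pair, the union of the two northwest rectangles is the whole board), so this is a valid checkerboard by the paper's definition. Yet the red checker at $(3,1)$ lies strictly left of column $c_D=3$ and is not on the square of either $(2,1)\in B$ or $(4,2)\in E$. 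Thus the first sentence of the lemma fails for this checkerboard; it is simply not one that arises in the game.

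Your key step---that the northwest rectangle for $S$ contains exactly $|S|$ black checkers only when each red sits at row $\rho(c')$---is false already for $|S|=1$: the singleton $S=\{(3,1)\}$ has northwest rectangle $[1,3]\times[1,1]$ containing exactly one black checker, yet the red is not at $\rho(1)=2$. Moving a red down within its column need not lose any black checkers from the rectangle. (You have also taken the northwest region of $S$ to be the componentwise-minimum rectangle; the paper's worked example on $\Gr(2,6)$ shows it is the \emph{union} of the individual northwest rectangles, but this does not rescue the argument.) The correct route is the paper's: check the claim at $\omega_0$ and then verify case by case that each move in Table~\ref{t:move} preserves it.
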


\begin{proof}
 This is true in the initial position in the permutation array $\omega_0$, and each move of Table~\ref{t:move}
 preserves this configuration.
\end{proof}

For a permutation $\pi$, let \defcolor{$P_\pi$} be the space of pairs of flags $(F,M)$ in relative position $\pi$.
If $\pi$ follows $\sigma$ in the bubble sort, then in the
space of pairs of flags, $P_\pi$ lies in the closure of $P_\sigma$ and is dense in a
component of $\overline{P_\sigma}\smallsetminus P_\sigma$ so that $\overline{P_\pi}$ is a boundary divisor of
$\overline{P_\sigma}$. 

Suppose that $\cbd'$ is a checkerboard with permutation array $\sigma$ and child checkerboard $\cbd$
with permutation array $\pi$ (or $\cbd$ and $\cbd''$ are its two children in case $(1,1)$ with no blockers).
Let $Y$ be the family over $P_\pi\cup P_\sigma\subset\overline{P_\sigma}$ whose fiber over  $(F,M)\in P_\sigma$ is the
checkerboard variety $Y_{\cbd'}(F,M)$ and over $(F,M)\in P_\pi$ is the checkerboard variety $Y_{\cbd}(F,M)$  
(or $Y_{\cbd}(F,M)\cup Y_{\cbd''}(F,M)$ in case (1,1)).
Then Theorem~2.13 of~\cite{Va06a} states that $Y$ is the closure in 
$(P_\pi\cup P_\sigma)\times\Gr(k,n)$ of its restriction to $P_\sigma$.

At the conclusion of the checkerboard game, all checkers lie along the main diagonal.
For such a checkerboard, the corresponding checkerboard variety is the Schubert variety $X_\gamma F$, where the red
checkers lie in positions $(\gamma_1,\gamma_1),\dotsc,(\gamma_k,\gamma_k)$. 

Figure~\ref{F:checkergame} shows the checkerboard game in the first nontrivial case when $n=4$, $k=2$ and
$\alpha=\beta=[2,4]$.  
It deforms $X_{[2,4]}F\cap X_{[2,4]}M$ into $X_{[1,4]}F\cup X_{[2,3]}F$.
\begin{figure}[htb] 
 \begin{picture}(450,112)(2,46) 
%
%
   \put(  8,154){stage 0} 
   \put( 75,154){stage 1} 
   \put(144,154){stage 2} 
   \put(211,154){stage 3} 
   \put(278,154){stage 4} 
   \put(345,154){stage 5} 
   \put(412,154){stage 6} 
  \put(2,74){\includegraphics{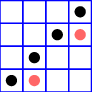}} 
 
  \put(50,96){\vector(1,0){15}} \put(52,100){\scriptsize$22$}
 
  \put(69,74){\includegraphics{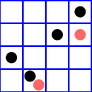}} 
 
  \put(118,100){\vector(2, 3){15}} \put(116,128){\scriptsize swap}
  \put(120, 94){\scriptsize$11$}
  \put(118, 92){\vector(2,-3){15}} \put(118,62){\scriptsize stay}
 
  \put(138,102){\includegraphics{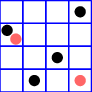}} 
  \put(138, 46){\includegraphics{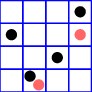}} 

  \put(186,124){\vector(1,0){15}} \put(188,128){\scriptsize$22$}
  \put(186, 68){\vector(1,0){15}} \put(188, 72){\scriptsize$02$}
 
  \put(205,102){\includegraphics{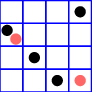}} 
  \put(205, 46){\includegraphics{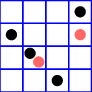}} 
 
  \put(253,124){\vector(1,0){15}} \put(255,128){\scriptsize$02$}
  \put(253, 68){\vector(1,0){15}} \put(255, 72){\scriptsize$22$}
 
  \put(272,102){\includegraphics{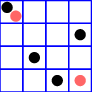}} 
  \put(272, 46){\includegraphics{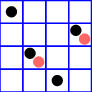}} 
 
  \put(320,124){\vector(1,0){15}} \put(322,128){\scriptsize$22$}
  \put(320, 68){\vector(1,0){15}} \put(322, 72){\scriptsize$00$}
 
  \put(339,102){\includegraphics{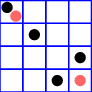}} 
  \put(339, 46){\includegraphics{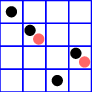}} 
 
  \put(387,124){\vector(1,0){15}} \put(389,128){\scriptsize$22$}
  \put(387, 68){\vector(1,0){15}} \put(389, 72){\scriptsize$20$}
 
  \put(406,102){\includegraphics{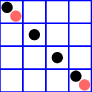}} 
  \put(406, 46){\includegraphics{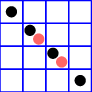}} 
 
  \end{picture}
\caption{Resolving the intersection $X_{[2,4]}F\cap X_{[2,4]}M$.} 
\label{F:checkergame} 
\end{figure} 
The arrows are labeled by the position of the move in Table~\ref{t:move}.
The geometry does not change in the first step, as the $2$-plane $H$ continues to meet both
$M_2=\langle \bfm_1,\bfm_2\rangle$  and $F_2=\langle \bfm_3,\bfm_4\rangle$ in a 1-dimensional subspace.
In the second stage, $H$ continues to meet both $M_2$ and $F_2$, but these now meet in $\langle\bfm_2\rangle$.  
There are two possibilities for $H$ as we are in case $(1,1)$ of Table~\ref{t:move}.
Either $\bfm_2\in H$ (swap) or $H\subset\langle F_2,M_2\rangle=\langle \bfm_1,\bfm_2,\bfm_4\rangle$ (stay).
In subsequent moves the vectors $\bfm_1,\dotsc,\bfm_4$ rearrange themselves.
Three dimensional pictures in~\cite[Figure 4]{SVV} illustrate Figure~\ref{F:checkergame}\footnote{Animations at
  {\tt http://www.math.tamu.edu/\~{}sottile/research/stories/vakil/4lines/1.html}}.

A checkerboard game may have identical nodes.
Since the children of a node depend only on the checkerboard of that node
(and not on the previous history),
we may identify identical nodes, 
obtaining a ranked \demph{checkerboard poset} whose maximal elements (leaves)
are indexed by a subset of those brackets $\gamma$ with $\|\gamma\|=\|\alpha\|+\|\beta\|$.

Suppose that we have a Schubert problem, $(\beta^1,\beta^2,\dotsc,\beta^s)$.
The checkerboard poset for $\beta^1,\beta^2$ has leaves indexed by brackets 
$\alpha$ with  $\|\alpha\|=\|\beta^1\|+\|\beta^2\|$.
For each such $\alpha$, we form the checkerboard poset for $\alpha,\beta^3$ and attach it to the
leaf labeled $\alpha$. 
Identifying identical nodes in this new poset gives a poset whose leaves are indexed by brackets $\gamma$ with 
$\|\gamma\|=\|\beta^1\|+\|\beta^2\|+\|\beta^3\|$.
Repeating this process forms the \demph{checkerboard tournament}, which is a poset having $s{-}2$ levels of
checkerboard posets whose leaves are labeled by brackets $\delta$ with 
$\|\delta\|+\|\beta^s\|=k(n{-}k)$.
We prune this poset, leaving only the single leaf labeled by the sequence 
$\defcolor{(\beta^s)^\vee}:=(n{+}1{-}\beta^s_k,\dotsc,n{+}1{-}\beta^s_1)$.
The number of solutions to the original Schubert problem is the number of saturated chains in this poset from the root to 
the unique leaf, by Corollary~2.17 and the discussion in Subsection~2.18 of~\cite{Va06a}.

\section{The Littlewood-Richardson Homotopy}\label{S:LRH}

We first explain the Littlewood-Richardson homotopy conceptually.
Given a Schubert problem $(\beta^1,\dotsc,\beta^s)$
and flags $F,F^2,\dotsc,F^s$, suppose that we know
all the points of  
 \begin{equation}\label{Eq:smSchubProb}
   X_\gamma F\;\cap\;
   X_{\beta^3} F^3\,\cap\, \dotsb\,\cap\,
   X_{\beta^s} F^s
 \end{equation}
for $\gamma$ any index with $\|\gamma\|=\|\beta^1\|+\|\beta^2\|$.
We use this to find all solutions to the instance of the Schubert problem
 \begin{equation}\label{LgSchubProb}
   X_{\beta^1}F\,\cap\, X_{\beta^2}F^2\;\cap\;
   X_{\beta^3} F^3\,\cap\, \dotsb\,\cap\,
   X_{\beta^s} F^s\,.
 \end{equation}
Formulating membership in $X_{\beta^3} F^3\cap\dotsb\cap X_{\beta^s} F^s$ as a system of polynomial equations,
we use the geometric Littlewood-Richardson rule for $X_{\beta^1}F\cap X_{\beta^2}F^2$ to continue the
points of~\eqref{Eq:smSchubProb} for all $\gamma$ 
back to solutions to the instance~\eqref{LgSchubProb} of the original Schubert problem. 

Similarly, if for some $\ell$, all solutions to instances of Schubert problems of the form
 \begin{equation}\label{Eq:gameLeaf}
   X_\gamma F\,\cap\,
   X_{\beta^\ell} F^\ell\,\cap\, \dotsb\,\cap\,
   X_{\beta^s} F^s
 \end{equation}
are known for all $\gamma$ with $\|\gamma\|+\|\beta^\ell\|+\dotsb+\|\beta^s\|=k(n{-}k)$, then we may find all
solutions to Schubert problems of the form 
 \begin{equation}\label{Eq:gameRoot}
   X_\alpha F\,\cap\,  X_{\beta^{\ell-1}} F^{\ell-1}\;\cap\;
   X_{\beta^\ell} F^\ell\,\cap\, \dotsb\,\cap\,
   X_{\beta^s} F^s \,,
 \end{equation}
for all $\alpha$ with $\|\alpha\|+\|\beta^{\ell-1}\|+\|\beta^\ell\|+\dotsb+\|\beta^s\|=k(n{-}k)$.
Thus starting with the (known) solution~\eqref{Eq:trivSChProb} to $X_{(\beta^s)^\vee} F\cap X_{\beta^s} F^s$, 
after $s{-}2$ iterations of this procedure we obtain all solutions to the original Schubert problem. 

In passing from the Schubert problem~\eqref{Eq:gameLeaf} coming from a leaf of the checkerboard game for the
pair $(\alpha,\beta^{\ell-1})$ to the problem corresponding to its root~\eqref{Eq:gameRoot}, 
we encounter \demph{intermediate Schubert problems} corresponding to nodes $\cbd$ of the checkerboard game.
An instance of the intermediate Schubert problem corresponding to the node $\cbd$ is an intersection 
 \begin{equation}\label{Eq:Intermediate_def}
   Y_{\cbd}(F,M)\ \cap\ 
   X_{\beta^\ell} F^\ell\,\cap\, \dotsb\,\cap\,
   X_{\beta^s} F^s \,.
 \end{equation}
%

Our algorithm requires 1-parameter families of flags to use in each step of
the homotopy.
We also need to specify how the equations are generated, and how the solutions obtained from one 
checkerboard game are passed to the next one in the tournament.

In Subsection~\ref{SS:FlagFamilies} we describe the families of flags underlying each checkerboard game.
In Subsection~\ref{SS:coordinates} we describe the coordinate homotopies, one for each pair of subsequent nodes in a
checkerboard game. 
In Subsection~\ref{SS:allTogether} we explain how these fit together in the Littlewood-Richardson homotopy.

\subsection{Families of flags}\label{SS:FlagFamilies}

The Littlewood-Richardson homotopy uses the degenerations of the geometric Littlewood-Richardson rule along a
sequence of one-parameter families of flags that form a skeleton of the families $P_\sigma$ of
Section~\ref{SS:checkers}. 
This begins with $\binom{n}{2}{+}1$ pairs 
$(F,M)$  of flags in position $\pi$, one pair for each permutation $\pi$ in the bubble sort.
We also select $\binom{n}{2}$ explicit one-parameter families of pairs $(F'(t),M'(t))$ that connect these
flags.
The explicit choices we make here are those made in our software. 
The flags $F$ and $F'(t)$ are fixed to be the standard coordinate flag, so we only need to specify the
flags $M$ and $M'(t)$ for each permutation and family.
These have the following property.
If $M'$ corresponds to the permutation $\sigma$ and $M$ to the next permutation
$\pi$ in the bubble sort, then the family $M'(t)$ connecting them satisfies
 \begin{equation}\label{Eq:connecting}
   M'(0)\ =\ M   \qquad\mbox{and}\qquad   M'(1)\ =\ M'\,,
 \end{equation}
and for all $t\neq 0$, the pair $(F,M'(t))$ has position $\sigma$.

The subspace $F_i$ of $F$ is spanned by the $i$th column of the identity matrix.
At a permutation $\pi$, the flag $M$ is given by an ordered basis $\bfm_1,\dotsc,\bfm_n$ so
that $M_i$ is spanned by $\bfm_1,\dotsc,\bfm_i$ while $F_i$ is spanned by
$\bfm_{\pi(1)},\dotsc,\bfm_{\pi(i)}$, but $\bfm_1,\dotsc,\bfm_n$ is not 
necessarily a permutation of the columns of the identity matrix.
This is illustrated in the second row of Figure~\ref{F:matrices}.

At the leaves of a checkerboard game, $M=F$. We describe the other flags recursively.
Suppose that the flag $M$ corresponds to a permutation $\pi$ in the bubble sort with $\sigma$ the
previous permutation, and let $r$ be the critical row in the sort from $\sigma$ to $\pi$.
Then the flag $M'$ corresponding to $\sigma$ is given by the basis $\bfm'_1,\dotsc,\bfm'_n$, where
%
%
 \begin{equation}\label{Eq:MandMprime}
   \bfm'_i\ =\ \bfm_i\quad\mbox{for }i\neq r,r{+}1\,,
    \qquad
   \bfm'_r\ =\ \bfm_r-\bfm_{r+1}\,,
     \qquad\mbox{and}\qquad
   \bfm'_{r+1}\ =\ \bfm_r\,.
 \end{equation}
For $t\neq 0$, the family $M'(t)$ is given by the basis
$\bfm'_1(t),\dotsc,\bfm'_n(t)$, where  
 \begin{eqnarray}
  \bfm'_i(t)&=& \bfm_i\ =\ \bfm'_i\qquad i\neq r,r{+}1\,,\nonumber\\
  \bfm'_r(t)&=& \bfm_r-t\bfm_{r+1}\ =\ t\bfm'_r + (1-t)\bfm'_{r+1}\,,\qquad\mbox{and}\label{Eq:movingFlag}\\
  \bfm'_{r+1}(t)&=& \bfm_r\ =\ \bfm'_{r+1}\,.\nonumber
 \end{eqnarray}
For $t\neq 0$, we have $\langle\bfm'_r(t),\bfm'_{r+1}(t)\rangle=\langle\bfm_r,\bfm_{r+1}\rangle$.
As $\lim_{t\to 0}M'(t)=M$, we set $M(0):=M$.  
The flag $M$ at the root corresponds to the triangular matrix $(m_{i,j})$,
where 
\[
    m_{i,j}\ =\ \left\{\begin{array}{rcl} 0&\ &\mbox{if } n<j+i\\
                                       (-1)^i&&\mbox{otherwise.}\end{array}\right.
\]

Figure~\ref{F:matrices} shows the permutations, arrays, matrices $M$, and families $M'(t)$ when
$n=4$.
\begin{figure}[htb]
 \begin{picture}(408,130)
  \put( 31,120){$1234$}\put( 87,120){$1243$}\put(143,120){$1342$}\put(199,120){$2341$}
  \put(255,120){$2431$}\put(312,120){$3421$}\put(368,120){$4321$}

  \put( 25,77){\includegraphics{pictures/1234.eps}}  \put( 81,77){\includegraphics{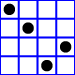}}
  \put(137,77){\includegraphics{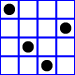}}  \put(193,77){\includegraphics{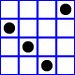}}
  \put(249,77){\includegraphics{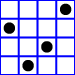}}  \put(305,77){\includegraphics{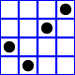}}
  \put(361,77){\includegraphics{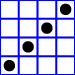}}
  \put(0,48){$M$}    \put(27,48){$\MI$}   \put(80,48){$\MII$}
                   \put(136,48){$\MIII$}  \put(189,48){$\MIV$}
                   \put(243,48){$\MV$}    \put(298,48){$\MVI$}
                  \put(352,48){$\MVII$} 
  \put(0,10){$M'(t)$}\put(50,10){$\MIt$}   \put(105,10){$\MIIt$}
                   \put(163,10){$\MIIIt$}  \put(218,10){$\MIVt$}
                   \put(272,10){$\MVt$}    \put(325,10){$\MVIt$}
 \end{picture}
 \caption{Permutation arrays, matrices $M$, and families of matrices $M(t)$.}
 \label{F:matrices}
\end{figure}

\subsection{Stiefel coordinates and homotopy for checkerboard moves}\label{SS:coordinates}
Suppose that the permutation $\sigma$ is followed by $\pi$ in the bubble sort.
Fix, as in Subsection~\ref{SS:FlagFamilies}, the flags $F$, $M$, $M'$, and $M'(t)$.
Let $\cbd'$ be a checkerboard with permutation array $\sigma$ and suppose
that $\cbd$ is a child checkerboard of $\cbd'$ with permutation array $\pi$.
Then by Theorem 2.3 of~\cite{Va06a} 
the family of checkerboard varieties $Y_{\cbd'}(F,M'(t))$ for $t\neq 0$
extends to a family $Y_{\cbd,\cbd'}(t)$ over $\C$ with $Y_{\cbd}(F,M)$ a component of the special fiber at $t=0$.
(If $\cbd$ is the unique child checkerboard of $\cbd'$, then $Y_{\cbd}(F,M)$ is the special fiber, otherwise 
there is a second component $Y_{\cbd''}(F,M)$ corresponding to the other child $\cbd''$.)

The key construction in the Littlewood-Richardson homotopy is a set of Stiefel coordinates $\calY_{\cbd}(t)$ for
this family, in the following sense.
 \begin{enumerate}
  \item[(i)] $\calY_{\cbd}(0)$ are Stiefel coordinates for $Y_{\cbd}(F,M)$ in that $\phi(M\calY_{\cbd}(0))$ is
         dense in the checkerboard variety $Y_{\cbd}(F,M)=Y_{\cbd}(F,M(0))$. 
 
  \item[(ii)] For $t\neq 0$, we have that $\phi(M\calY_{\cbd}(t))$ is dense in $Y_{\cbd'}(F,M(t))$.
 \end{enumerate}
Thus $\calY_{\cbd}(t)$ gives Stiefel coordinates for the family $Y_{\cbd'}(F,M(t))$, parameterizing
an open subset that meets the component $Y_{\cbd}(F,M)$ of the special fiber.
These coordinates $\calY_{\cbd}(t)$ will be defined below and their properties verified.

\begin{remark}\label{r:twoChildren}
  If $\cbd'$ has another child $\cbd''$, then $\calY_{\cbd''}(t)$ also gives Stiefel coordinates for 
  $Y_{\cbd'}(F,M(t))$ and $\phi(M\calY_{\cbd''}(t))$ meets the component $Y_{\cbd''}(F,M)$ of the special
  fiber.  \hfill $\diamond$
\end{remark}

These Stiefel coordinates $\calY_{\cbd}(t)$ are used to generate a homotopy 
corresponding to the edge $\cbd$--$\cbd'$ in the checkerboard tournament.
We describe this homotopy.

%
\medskip

\begin{breakablealgorithm}
\caption{(Checkerboard Homotopy Algorithm)}\label{Alg:core}
\begin{algorithmic}[1]
Let $(\gamma,\beta^{\ell-1},\beta^\ell,\dotsb,\beta^s)$ be a Schubert problem and 
suppose that $\cbd$--$\cbd'$ is an edge in the checkerboard game 
for $(\gamma,\beta^{\ell-1})$ with $\cbd'$ the parent of $\cbd$. 
\REQUIRE{A solution \defcolor{$y^*$} to the instance of the intermediate problem
%
%
%
 \[  
   Y_{\cbd}(F,M)\ \cap\ X_{\beta^\ell}F^{\ell}\ \cap\ \dotsb\ \cap\ X_{\beta^s}F^s\,
 \]  
  represented as a matrix $(y^*_{i,j})\in\calY_{\cbd}$ such that $y^*=\phi(M(y^*_{i,j}))$.}
  
\ENSURE{The solution \defcolor{$y'$} to the instance of the intermediate problem
 \begin{equation}\label{Eq:target}
     Y_{\cbd'}(F,M')\ \cap\ X_{\beta^\ell}F^{\ell}\ \cap\ \dotsb\ \cap\ X_{\beta^s}F^s\,
 \end{equation}
  connected to $y^*$ by the family $Y_{\cbd,\cbd'}(t)$ for $t\in[0,1]$, which is represented by a matrix 
  $(y'_{i,j})\in\calY_{\cbd'}$ with $y'=\phi(M'(y'_{i,j}))$.}
\STATE{Generate the coordinates $\calY_{\cbd}(t)$ for $Y_{\cbd,\cbd'}(t)$.}
\STATE{The homotopy $\mathcal{H}(y;t)$ is given by the equations  of Theorem~\ref{Thm:efficient} for membership in the
  Schubert varieties $ X_{\beta^\ell}F^{\ell}, \dotsc, X_{\beta^s}F^s$ evaluated on the Stiefel coordinates $M
  \calY_{\cbd}(t)$.} 
\STATE{Use numerical continuation to follow the homotopy $\mathcal{H}(y;t)$ from the the start solution  $(y^*_{i,j})$
  at $t=0$ to a solution $(y^*_{i,j}(1))$ at $t=1$.} 
\STATE{
   Solve the equation 
 \begin{equation}\label{Eq:easyLinAlg}
   M' (\tilde{y}_{i,j})\ =\ M (y^*_{i,j}(1))
 \end{equation}
 for the matrix $(\tilde{y}_{i,j})$.
}
\STATE{Put the solution $(\tilde{y}_{i,j})$ in echelon form to get a point $(y'_{i,j})\in\calY_{\cbd'}$.}
\end{algorithmic}
\end{breakablealgorithm}

\begin{proof}[Proof of correctness]
  The coordinates $\calY_{\cbd}(t)$ satisfy the properties (i) and (ii) above and thus $M\calY_{\cbd}$ gives
  Stiefel coordinates for the family $Y_{\cbd,\cbd'}(t)$.
  It follows that this homotopy computes a point $y'$ in 
  the target intermediate problem~\eqref{Eq:target}.
 
 The arguments in Cases I--III below show that the echelon form of a solution to~\eqref{Eq:easyLinAlg} 
 lies in the Stiefel coordinates $\calY_{\cbd'}$, which completes the proof.
\end{proof}

\begin{remark}\label{r:nochange}
 In passing from $\pi$ to $\sigma$, the black checkers in rows
 $r$ and $r{+}1$ switch rows, 
\[
   \pi\ \colon\ \raisebox{-8pt}{\begin{picture}(44,22)
     \put(0,0){\includegraphics{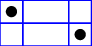}}
     \put(15,5){$\dotsc$}     \put(15,16){$\dotsc$}
    \end{picture}}
    \qquad\mbox{becomes}\qquad
   \sigma\ \colon\ \raisebox{-8pt}{\begin{picture}(44,22)
     \put(0,0){\includegraphics{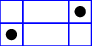}}
     \put(15,5){$\dotsc$}     \put(15,16){$\dotsc$}
    \end{picture}}\ .
\]
 If $M$ is the flag for $\pi$ and $M'$ the flag for $\sigma$, then by~\eqref{Eq:MandMprime}, $\bfm'_{r+1}=\bfm_r$
 and $\bfm'_r=\bfm_r-\bfm_{r+1}$.
 Thus the basis element corresponding to the left moving black checker is unchanged,
 while that corresponding to the right moving black checker is changed,
 but their span is unchanged.
 It follows that if there is no red checker in the critical row $r$, then the geometric
 condition on the $k$-plane is unchanged in the move.
\hfill $\diamond$
\end{remark}

As there are ten different checkerboard moves in Table~\ref{t:move}, there are potentially ten different families
of Stiefel coordinates $\calY_{\cbd}$ for the family $Y_{\cbd,\cbd'}(t)$.
Analyzing their geometry reveals there are only three geometrically distinct cases for the construction of
$\calY_{\cbd}(t)$.
We indicate these cases by their positions in the $3\times 3$ array of Table~\ref{t:move},
\[
    \mbox{I}\ \colon\ 
    \raisebox{-9pt}{\begin{picture}(27,27)
      \put(0,0){\includegraphics{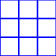}}
      \put(19.5,20){{\small x}}
      \put(19.5,11){{\small x}}      
      \put(19.5, 2){{\small x}}
    \end{picture}}\ ,
     \qquad
    \mbox{II}\ \colon\ 
    \raisebox{-10pt}{\begin{picture}(40,29)
      \put(0,0){\includegraphics{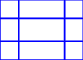}}
      \put(10.5,12){{\small stay}}
      \put(1.5,2){{\small x}}      \put(17,2){{\small x}}
    \end{picture}}\ ,
     \quad\mbox{and }\quad
    \mbox{III}\ \colon\ 
    \raisebox{-10pt}{\begin{picture}(44,29)
      \put(0,0){\includegraphics{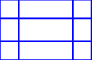}}
      \put(1.5,22){{\small x}}       \put(20,22){{\small x}}
        \put(1.5,12){{\small x}}     \put(10.5,12){{\small swap}}
    \end{picture}}\ ,
\]
and refer to them by the numerals I, II, and III in the sequel.\medskip

\noindent{\bf Case I.}\ 
There is no red checker in the critical row, so 
the geometric condition on the $k$-plane does not change, as noted in Remark~\ref{r:nochange}.
We need only to explain how to transform the coordinates $\calY_{\cbd}$ of a given $k$-plane
into the coordinates $\calY_{\cbd'}$ so that 
 \begin{equation}\label{Eq:I_correct}
   M'\calY_{\cbd'}\ =\ M \calY_{\cbd}\,.
 \end{equation}
({\it cf}.~\eqref{Eq:easyLinAlg}.)
Write $y'_{i,j}$ and $y_{i,j}$ for the entry in row $i$ and column $j$ of $\calY_{\cbd'}$
and $\calY_{\cbd}$ respectively, and let $r$ be the critical row.
If we set
%
%
 \begin{equation}\label{Eq:CaseI}
   y'_{i,j}\ :=\ y_{i,j}\ \ i\neq r,r{+}1\,, \quad
   y'_{r,j}\ :=\ -y_{r+1,j}\,,\quad 
   \mbox{and}\quad
   y'_{r+1,j}\ :=\ y_{r,j}+y_{r+1,j}\,,
 \end{equation}
then~\eqref{Eq:I_correct} is satisfied as
 \begin{eqnarray*}
 y_{r,j}\bfm_r+y_{r+1,j}\bfm_{r+1}&=&
    y_{r,j}\bfm'_{r+1} + y_{r+1,j}(\bfm'_{r+1}-\bfm'_r)\\
    &=& -y_{r+1,j}\bfm'_r \, +\, (y_{r,j}+y_{r+1,j})\bfm'_{r+1}\\
    &=&  y'_{r,j}\bfm'_r \, +\,  y'_{r+1,j}\bfm'_{r+1}\,.
 \end{eqnarray*}
In practice, our software solves the equation~\eqref{Eq:I_correct} for the entries of $\calY_{\cbd'}$.

If there is a red checker in row $r{+}1$ of $\cbd'$, then its column will not be in echelon form in
$\calY_{\cbd'}:$
If its column index  is $j$, then the last two non-zero entries are in rows $r$ and $r{+}1$, 
and they are $y'_{r,j}=-1$ and $y'_{r+1,j}=1+y_{r,j}$.
In this case, we  divide that column by $y'_{r+1,j}$ to put $\calY_{\cbd'}$ into (reduced) echelon form,
as we do in our software.\medskip

%
\noindent{\bf Case II.}\ 
As there is a checker in the critical row, by Remark~\ref{r:nochange}, the geometric condition on the $k$-plane changes
and the Stiefel coordinates $\calY_{\cbd}(t)$ will involve $t$.
We describe them  and then prove they have the properties claimed.
We will write $j\in A, B$ to indicate that the $j$th red checker of $\cbd$ is in region $A$ or in region $B$, and
  the same for the other regions or rows of the checkerboard as defined in Figure~\ref{F:CheckerBoardVariety}.

Let $(y_{i,j})=\calY_{\cbd}$ be the Stiefel coordinates from Definition~\ref{Def:Stiefel_Coords}.
Define $\calY_{\cbd}(t)=(y_{i,j}(t))$, by setting $y_{i,j}(t):=y_{i,j}$ if $i\neq r{+}1$.
When $i=r{+}1$, set $y_{r+1,j}(t) := y_{r+1,j} = 0$ if $j\in E$, and otherwise set 
 \begin{equation}\label{Eq:caseII}
   y_{r+1,j}(t)\ :=\ y_{r+1,j}\ -\ ty_{r,j}\,.
 \end{equation}
Observe that if $j\in A, B,$ or $C$, then its row is above $r$ so that $y_{r+1,j}=y_{r,j} = y_{r+1,j}(t) = 0$.
Note that $y_{r+1,j}(t)$ is non-zero when $j\in F$ or when $j$ lies in row $r$, for when $j$ lies in row $r$,
$y_{r,j}=1$ and $y_{r+1,j}=0$.

\begin{lemma}\label{L:St_family}
 For any $t\neq 0$, $\phi(M\calY_{\cbd}(t))$ is dense in the checkerboard variety $Y_{\cbd'}(F,M'(t))$ and
 $\phi(M\calY_{\cbd}(0))$ is dense in $Y_{\cbd}(F,M'(0))$.
\end{lemma}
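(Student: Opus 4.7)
I split the proof according to whether $t = 0$ or $t \neq 0$.

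For $t = 0$, the plan is to show directly that $\calY_{\cbd}(0) = \calY_{\cbd}$, so the claim reduces to the defining property of the Stiefel coordinates $\calY_{\cbd}$ (Definition~\ref{Def:Stiefel_Coords}). Outside row $r{+}1$ no modification is made. In row $r{+}1$ and column $j \notin E$ the formula \eqref{Eq:caseII} gives $y_{r+1,j} - 0 \cdot y_{r,j} = y_{r+1,j}$, while for $j \in E$ it sets $y_{r+1,j}(t)$ identically to $0$; one must verify that $y_{r+1,j} = 0$ already in $\calY_{\cbd}$ by inspecting region $E$ of $\cbd$ --- the black checker of row $r{+}1$ either shares its square with the top red checker of $E$ or fails to lie strictly northwest of any red checker there, so Definition~\ref{Def:Stiefel_Coords} forces that entry to vanish. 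Combined with $M'(0) = M$, this gives $\phi(M\calY_{\cbd}(0)) = \phi(M\calY_{\cbd})$, which is dense in $Y_{\cbd}(F, M'(0))$.

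For $t \neq 0$ I would exploit the change-of-basis relation $M = M'(t)\, T(t)^{-1}$ coming from \eqref{Eq:movingFlag}: $T(t)$ is the identity outside its $\{r,r{+}1\}$-block $\left(\begin{smallmatrix} 1 & 1 \\ -t & 0 \end{smallmatrix}\right)$, and a $2 \times 2$ inversion makes $T(t)^{-1}$ explicit. Then $M\,\calY_{\cbd}(t) = M'(t) \cdot \bigl(T(t)^{-1}\,\calY_{\cbd}(t)\bigr)$, so --- since $\phi$ is invariant under right multiplication by $GL_k$ --- it suffices to show that the second factor, after rescaling each column so its pivot entry is $1$, lies in $\calY_{\cbd'}$ and realizes its free entries as an algebraically invertible function of the free entries of $\calY_{\cbd}$. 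A direct computation gives that rows $i \neq r, r{+}1$ are unchanged, while row $r$ of column $j$ becomes $-y_{r+1,j}(t)/t$ and row $r{+}1$ becomes $y_{r,j} + y_{r+1,j}(t)/t$; substituting the definition of $y_{r+1,j}(t)$ yields simple rational expressions that split cleanly according to whether $j \in E$.

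The technical heart is the entry-by-entry check that this matrix matches the zero/one/indeterminate pattern of $\calY_{\cbd'}$ prescribed by Definition~\ref{Def:Stiefel_Coords} for the checkerboard $\cbd'$ (in which the black checkers in rows $r$ and $r{+}1$ are swapped from $\cbd$). I would stratify the columns by the region --- $A$, $B$, $C$, $D$, $E$, or $F$ --- in which the $j$-th red checker lies, together with the special cases of a red checker in the critical row or in the row of the ascending checker, and check each entry in rows $r, r{+}1$ against the Stiefel recipe. Lemma~\ref{L:redPositions} is invoked to rule out spurious northwest relations among red checkers so that the correct echelon zeros are forced. Birationality of the resulting substitution from the free entries of $\calY_{\cbd}(t)$ to those of $\calY_{\cbd'}$, at each fixed $t \neq 0$, then gives the desired density.

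The principal obstacle is this combinatorial verification: the underlying geometric fact --- that the family $Y_{\cbd,\cbd'}(t)$ has $Y_{\cbd'}(F,M'(t))$ as its generic fiber and contains $Y_{\cbd}(F,M)$ as a component of the fiber at $t=0$ --- is already provided by Theorem~2.3 of~\cite{Va06a}, so the content of the lemma is merely that the explicit algebraic recipe for $\calY_{\cbd}(t)$ implements this family in Stiefel coordinates. The verification is mechanical but has several subcases, and the Case II hypothesis (a red checker in the critical row, and the stay configuration) must be used precisely to rule out the pattern mismatches that could otherwise occur between $\calY_{\cbd}$ and $\calY_{\cbd'}$.
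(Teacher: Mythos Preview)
Your proposal is correct and follows essentially the same route as the paper: verify $\calY_{\cbd}(0)=\calY_{\cbd}$ directly, and for $t\neq 0$ rewrite $M\calY_{\cbd}(t)$ in the basis $\bfm'_1(t),\dotsc,\bfm'_n(t)$ (equivalently, left-multiply by your $T(t)^{-1}$) and check that the resulting matrix has the $0/1/\text{indeterminate}$ pattern of $\calY_{\cbd'}$. Two small remarks: the paper carries this out with only the dichotomy $j\in E$ versus $j\notin E$ rather than the full $A$--$F$ stratification you propose, and in Case~II no column rescaling is actually needed---the pivot entries come out equal to $1$ automatically, so the $GL_k$ invariance you invoke is harmless but superfluous.
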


\begin{proof}
 When $t=0$, this holds as $\calY_{\cbd}(0)=\calY_{\cbd}$,  $M\calY_{\cbd}$
 gives Stiefel coordinates for $Y_{\cbd}(F,M)$, and $M'(0)=M$.
 For $t\neq 0$, we will show that if we solve the equation $M'(t)\calY_{\cbd'}(t)=M\calY_{\cbd}(t)$ 
 for the $n\times k$ matrix $\calY_{\cbd'}(t)$, 
 then $\calY_{\cbd'}(t)$ for $t\neq 0$ is a curve in $\calY_{\cbd'}$ whose entries are functions of $y_{i,j}$ and $t$.

 Let $\bfh_1(t),\dotsc,\bfh_k(t)$ be the column vectors of $M\calY_{\cbd}(t)$, which span the $k$-plane
 $\phi(M\calY_{\cbd}(t))$. 
 If $j\in E$, then 
\[
   \bfh_j(t)\ =\ \sum_{i\neq r,r+1} y_{i,j}\bfm_i\ +\ y_{r,j}\bfm_r\ +\ 0\cdot\bfm_{r+1}\,,
\]
 as $y_{r+1,j}=0$.
 If $j\not\in E$, then  by~\eqref{Eq:caseII}, 
\[
   \bfh_j(t)\ =\ \sum_{i\neq r,r+1} y_{i,j}\bfm_i\ +\ y_{r,j}\bfm_r\ +\ 
           (y_{r+1,j} - t y_{r,j})\bfm_{r+1}\,.
\]

 Let us express $\bfh_i(t)$ in the basis $\bfm'_1(t),\dotsc,\bfm'_n(t)$.
 If $i\neq r,r{+}1$, then by~\eqref{Eq:movingFlag}, $\bfm'_i(t)=\bfm_i$, and we have  $\bfm'_r(t)=\bfm_r-t\bfm_{r+1}$
 and $\bfm'_{r+1}(t)=\bfm_r$, so that when $t\neq 0$, we have 
\[
    \bfm_{r+1}\ =\ \tfrac{1}{t}(\bfm'_{r+1}(t)\ -\ \bfm'_r(t))\,.
\]
 If $j\in E$, we have 
\[
   \bfh_j(t)\ =\ \sum_{i\neq r,r+1} y_{i,j}\bfm'_i(t)\ +\ y_{r,j}\bfm'_{r+1}(t)\,.
\]
 If $j\not\in E$, then 
\[
   \bfh_j(t)\ =\ \sum_{i\neq r,r+1} y_{i,j}\bfm'_i(t)\ +\ (y_{r,j}-\tfrac{1}{t}y_{r+1,j})\bfm'_r(t)\ +\ 
           \tfrac{1}{t}y_{r+1,j}\bfm'_{r+1}(t)\,.
\]

 Define the Stiefel coordinates $\calY_{\cbd'}(t)=(y'_{i,j}(t))$ for $t\neq 0$ by
\[
   y'_{i,j}(t)\ =\ y_{i,j}\ \quad\mbox{for}\quad i\neq r,r{+}1\,,
\]
 and if $j\in E$, then 
\[
  y'_{r,j}(t)\ =\ 0\ =\ y_{r+1,j}
   \qquad\mbox{and}\qquad
  y'_{r+1,j}(t)\ =\ y_{r,j}\,,
\]
 and if $j\not\in E$, then 
\[
   y'_{r,j}(t)\ =\ y_{r,j}\ -\ \tfrac{1}{t}y_{r+1,j}
   \qquad\mbox{and}\qquad
  y'_{r+1,j}(t)\ =\ \tfrac{1}{t}y_{r+1,j}\,.
\]
 A consequence of these definitions is that for $t\neq 0$, the column vectors of $M'(t)\calY_{\cbd'}(t)$ are equal to
 $\bfh_1(t),\dotsc,\bfh_k(t)$.
 That is, 
\[
   M'(t)\calY_{\cbd'}(t)\ =\  M \calY_{\cbd}(t)\,.
\]
 Note that the entry $y'_{i,j}(t)$ is 0, 1, or an affine polynomial in the $y_{p,q}$ and $\frac{1}{t}$ if and only if
 the corresponding entry in the Stiefel coordinates $\calY_{\cbd'}$ of Definition~\ref{Def:Stiefel_Coords} is 0,
 1, or an indeterminate, respectively.
 This proves the lemma.
\end{proof}

\noindent {\bf Case  III.}\ 
This case is the most subtle.
Let $\cbd$ be a child of $\cbd'$ with the checkerboard move in Case III 
in which two red checkers move columns.
Let $(y_{i,j})$ be the entries in $\calY_{\cbd}$, as given in Definition~\ref{Def:Stiefel_Coords}.
Let \defcolor{$s$} be the index of the red checker in the critical row $r$, and \defcolor{$s{+}1$} the index of
the other moving red checker, which is in row $R\geq r{+}1$.

Figure~\ref{F:ChildBoard} gives an example of $\cbd$ and $\calY_{\cbd}$, which is a child of the checkerboard
\begin{figure}[htb]
\begin{picture}(405,205)(0,-10)
\put(0,8){
  \begin{picture}(169,169)
   \put(1,1){\includegraphics{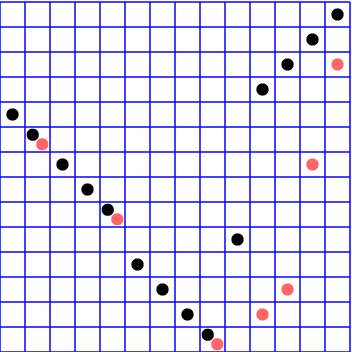}}
   \put(172,27){$R$} \put(172,63){$r$}
  \end{picture}
}
%
%
\put(200,87.5){\TEN
$\left[\begin{matrix}
    y_{1,1} &\cdot &\cdot &\cdot &\cdot &\cdot  &\cdot\\
    y_{2,1} &\cdot &y_{2,3} &\cdot &\cdot &\cdot &\cdot\\ 
    1     &\cdot &y_{3,3} &\cdot &y_{3,5} &\cdot &\cdot\\
    \cdot &\cdot &y_{4,3} &\cdot &y_{4,5} &y_{4,6}&\cdot\\ 
    \cdot &y_{5,2} &y_{5,3} &y_{5,4} &y_{5,5} & y_{5,6}&y_{5,7}\\
    \cdot & 1    & 0     & 0    & 0    & 0    & 0    \\
    \cdot &\cdot & 1    &y_{7,4} &y_{7,5} &y_{7,6} &y_{7,7}\\
    \cdot &\cdot &\cdot &y_{8,4} &y_{8,5}&y_{8,6}  &y_{8,7}\\
    \cdot &\cdot &\cdot & 1    & 0    & 0    & 0    \\
    \cdot &\cdot &\cdot &\cdot &y_{10,5}&y_{10,6} &\cdot\\
    \cdot &\cdot &\cdot &\cdot &y_{11,5}&y_{11,6}&y_{11,7}\\
    \cdot &\cdot &\cdot &\cdot & 1     &\Blue{y_{12,6}}&y_{12,7}\\
    \cdot &\cdot &\cdot &\cdot &\cdot  &1     &y_{13,7}\\
    \cdot &\cdot &\cdot &\cdot &\cdot  &\cdot  &1 
\end{matrix}\right]$}
\end{picture}
\caption{Stiefel coordinates corresponding to a checkerboard.}
\label{F:ChildBoard}
\end{figure}
$\cbd'$ of Figure~\ref{F:CheckerBoardVariety} with coordinates $\calY_{\cbd'}$,
where the move connecting them is the swap move in the center of Table~\ref{t:move}.
Comparing these two figures will help to explain our arguments.
In Figure~\ref{F:ChildBoard}, we have $s=4$, the red checker $s$ is to the left in row
$r=9$, and the red checker $s{+}1$ is to the right in row $R=12$. 
These two are in different columns in Figure~\ref{F:CheckerBoardVariety}.

We define $\calY_{\cbd}(t)=(y_{i,j}(t))$.
The entry $y_{i,j}(t)$ will depend on the position of the red checker $j$.
Recall that the black checkers are in regions $A$, $B$, $E$, or in row $r$.
 \begin{enumerate}
  \item If $j\neq s$, set $y_{i,j}(t):=y_{i,j}$.

  \item  When $j=s$, set $y_{r,s}(t):=y_{r+1,s+1}$ and $y_{r+1,s}(t):=-ty_{r+1,s+1}$, and 
 \[
   \begin{array}{rclcl}
      y_{a,s}(t)& :=& -t y_{a,s+1} &\qquad& \mbox{ for }a\in A, \\
      y_{b,s}(t)& :=& y_{r+1,s+1}\cdot y_{b,s} &\qquad& \mbox{ for }b\in B,
   \end{array}
  \] 
    and if $e\in E\smallsetminus\{r{+}1\}$, then $y_{e,s}(t)=0=y_{e,s}$, as $s$ is in row $r<e$.

    The terms $-ty_{a,s{+}1}$ for $a\in A$ occur only if the red checker $s$ in the critical row in
    $\cbd'$ is not in the square of the descending checker.

 \end{enumerate}
Observe that $M\calY_{\cbd}$ is equal to $M\calY_{\cbd}(t)$, except in column $s$, and that if $\bfh_s$ and $\bfh_s(t)$
are the vectors of column $s$ in $M\calY_{\cbd}$ and in $M\calY_{\cbd}(t)$ respectively, then  
 \begin{equation}\label{Eq:bfh_s}
    \bfh_s(t)\ =\  y_{r+1,s+1}\bfh_s \ -\ t\Bigl( y_{r+1,s+1}\bfm_{r+1} +\sum_{a\in A} y_{a,s+1}\bfm_a\Bigr)\,,
 \end{equation}
where the term inside the parentheses is a sum of components of the column vector $\bfh_{s+1}$.

\begin{lemma}\label{L:St_family_III}
 For any $t\neq 0$, $\phi(M\calY_{\cbd}(t))$ is dense in the checkerboard variety $Y_{\cbd'}(F,M'(t))$ and
 $\phi(M\calY_{\cbd}(0))$ is dense in $Y_{\cbd}(F,M'(0))$.
\end{lemma}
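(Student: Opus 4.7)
My plan mirrors the proof of Lemma~\ref{L:St_family} in Case~II: I verify the $t=0$ claim by a direct comparison of $\calY_{\cbd}(0)$ with $\calY_{\cbd}$, and for $t\neq 0$ I define Stiefel coordinates $\calY_{\cbd'}(t)=(y'_{i,j}(t))$ on the parent variety by solving the linear system $M'(t)\calY_{\cbd'}(t)=M\calY_{\cbd}(t)$ column by column, then check that each $y'_{i,j}(t)$ is $0$, $1$, or a rational function in the $y_{p,q}$ and $\tfrac{1}{t}$ at precisely the position where Definition~\ref{Def:Stiefel_Coords} prescribes $\calY_{\cbd'}$ to have the corresponding $0$, $1$, or indeterminate entry.

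At $t=0$ the defining formulas alter only column $s$: its entries are $y_{r+1,s+1}$ at row $r$, $y_{r+1,s+1}\cdot y_{b,s}$ at row $b\in B$, and $0$ at row $r{+}1$, at every $a\in A$, and at every $e\in E\setminus\{r{+}1\}$. Combining Lemma~\ref{L:redPositions} with the description of the swap case (in which red checker $s$ in $\cbd$ occupies the column previously held by $s{+}1$ in $\cbd'$) one checks that the $A$-row entries of column $s$ of $\calY_{\cbd}$ are already zero. Thus column $s$ of $\calY_{\cbd}(0)$ equals $y_{r+1,s+1}$ times column $s$ of $\calY_{\cbd}$, and since $y_{r+1,s+1}$ is an indeterminate entry of $\calY_{\cbd}$ (hence generically nonzero) and $\phi$ is invariant under column scaling, $\phi(M\calY_{\cbd}(0))$ coincides generically with $\phi(M\calY_{\cbd})$, which is dense in $Y_{\cbd}(F,M)$.

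For $t\neq 0$, I express each column $\bfh_j(t)$ of $M\calY_{\cbd}(t)$ in the basis $\bfm'_1(t),\dotsc,\bfm'_n(t)$ using the relations from~\eqref{Eq:movingFlag}: $\bfm_i=\bfm'_i(t)$ for $i\neq r,r{+}1$, $\bfm_r=\bfm'_{r+1}(t)$, and $\bfm_{r+1}=\tfrac{1}{t}(\bfm'_{r+1}(t)-\bfm'_r(t))$. For $j\neq s$ this reproduces the Case~II computation verbatim, yielding $y'_{i,j}(t)=y_{i,j}$ for $i\neq r,r{+}1$, $y'_{r,j}(t)=-\tfrac{1}{t}y_{r+1,j}$, and $y'_{r+1,j}(t)=y_{r,j}+\tfrac{1}{t}y_{r+1,j}$, matching the Stiefel pattern of column $j$ in $\calY_{\cbd'}$. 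For $j=s$ the $-t$ factors inserted into $\calY_{\cbd}(t)$ at row $r{+}1$ and at rows $a\in A$ are engineered so that the $\tfrac{1}{t}$ produced by $\bfm_{r+1}$ cancels exactly, giving
\[
\bfh_s(t)\;=\;y_{r+1,s+1}\bfm'_r(t)\;+\;\sum_{b\in B} y_{r+1,s+1}\,y_{b,s}\,\bfm'_b(t)\;+\;\sum_{a\in A}(-t)\,y_{a,s+1}\,\bfm'_a(t)\,.
\]
Dividing column $s$ of $\calY_{\cbd'}(t)$ by the generically nonzero scalar $y_{r+1,s+1}$ yields $1$ in row $r$, $0$ in rows $r{+}1$ and $e\in E\setminus\{r{+}1\}$, $y_{b,s}$ in rows $b\in B$, and $-ty_{a,s+1}/y_{r+1,s+1}$ in rows $a\in A$.

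The hard part is verifying that this pattern of entries matches exactly the $0$/$1$/indeterminate pattern of column $s$ in $\calY_{\cbd'}$ for the \emph{swapped} position of red checker $s$. Since in $\cbd'$ red checker $s$ lies at the column previously occupied by red checker $s{+}1$ in $\cbd$, additional region-$A$ black checkers become northwest of it, producing new indeterminate positions that must be filled by the rational functions $-ty_{a,s+1}/y_{r+1,s+1}$. A careful case analysis using Lemma~\ref{L:redPositions} and the no-blocker hypothesis of a swap must establish a bijection between the indeterminate positions of $\calY_{\cbd'}$ and the algebraically independent rational functions produced above (with the analogous, easier check for column $s{+}1$ performed via the Case~II formulas). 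Once this bijection is established, the induced rational map $\calY_{\cbd}\to\calY_{\cbd'}$ is dominant, so $\phi(M\calY_{\cbd}(t))$ is dense in $Y_{\cbd'}(F,M'(t))$.
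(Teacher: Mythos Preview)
Your treatment of $t=0$ and of column $s$ is fine and matches the paper. The gap is in your claim that ``for $j\neq s$ this reproduces the Case~II computation verbatim \dots\ matching the Stiefel pattern of column $j$ in $\calY_{\cbd'}$.'' This is false for $j=s{+}1$ and for every $j\in F$ or $j\in E\smallsetminus\{R\}$, and the failure is the whole point of Case~III.

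When the two red checkers swap columns, red checker $s{+}1$ in $\cbd'$ sits in the square of black checker $R$ (on the critical diagonal), whereas in $\cbd$ it does not. Definition~\ref{Def:Stiefel_Coords} then forces two changes of zero pattern that your formulas miss. First, in $\calY_{\cbd'}$ column $s{+}1$ must have $0$ in every row $a\in A$, but your change of basis gives $y'_{a,s+1}(t)=y_{a,s+1}$, a nonzero indeterminate of $\calY_{\cbd}$ (compare rows $3,4$ of column $5$ in Figures~\ref{F:CheckerBoardVariety} and~\ref{F:ChildBoard}). Second, for $j\in F$ or $j\in E\smallsetminus\{R\}$, the entry in row $R$ of $\calY_{\cbd'}$ must be $0$ because black checker $R$ now shares its square with red checker $s{+}1$, but your formula gives $y'_{R,j}(t)=y_{R,j}$, an indeterminate (compare the entry $y_{12,6}$ highlighted in Figure~\ref{F:ChildBoard} with the $0$ in Figure~\ref{F:CheckerBoardVariety}). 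So merely solving $M'(t)\calY_{\cbd'}(t)=M\calY_{\cbd}(t)$ row by row does not land in $\calY_{\cbd'}$.

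The paper repairs this with column operations that put the matrix into the reduced echelon form dictated by $\cbd'$: one replaces $\bfh_{s+1}(t)$ by $\bfh'_{s+1}(t):=\bfh_{s+1}(t)+\tfrac{1}{t}\bfh_s(t)$, which kills the $A$-row entries and the $\bfm_{r+1}$ term in column $s{+}1$; then for each $j\in F$ or $j\in E\smallsetminus\{R\}$ one replaces $\bfh_j(t)$ by $\bfh_j(t)-y_{R,j}\bfh'_{s+1}(t)$ to zero out row $R$. Only after these reductions do the resulting coefficients $y'_{i,j}(t)$ have the $0/1/$indeterminate pattern of $\calY_{\cbd'}$, and one must still verify (as the paper does case by case, treating $R=r{+}1$ separately) that the nonzero entries produced are genuine functions of the $y_{p,q}$ and $\tfrac{1}{t}$. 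Your proposal omits this entire reduction step and therefore does not establish that $\phi(M\calY_{\cbd}(t))$ lies in, let alone is dense in, $Y_{\cbd'}(F,M'(t))$.
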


\begin{proof}
 Note that $\calY_{\cbd}(0)=\calY_{\cbd}$, except in their $s$th columns.
 These columns are proportional, as $y_{i,s}(0)=y_{r+1,s+1}\cdot y_{i,s}$, for all $i$.
 This proves the statement for $t=0$.

 For $t\neq 0$, we show that $\phi(M\calY_{\cbd}(t))$ is dense in the checkerboard 
 variety $Y_{\cbd'}(F,M'(t))$ by describing Stiefel coordinates
 $\calY_{\cbd'}(t)=(y'_{i,j}(t))$ with 
 $\phi(M\calY_{\cbd}(t))=\phi(M'(t)\calY_{\cbd'}(t))$ that 
 have the following properties:
 \begin{equation}\label{Eq:ZProperties}
  \raisebox{12pt}{\begin{minipage}[t]{400pt}
   The transformation $\calY_{\cbd}(t)\to\calY_{\cbd'}(t)$ is invertible, and 
    the entry $y'_{i,j}(t)$ of $\calY_{\cbd'}(t)$ is $1$, $0$, or a function of 
    the $y_{p,q}$ and $t$ if and only if the entry in the Stiefel coordinates
    $\calY_{\cbd'}$ of Definition~\ref{Def:Stiefel_Coords} is $1$, $0$, or an indeterminate, respectively. 
  \end{minipage}}
 \end{equation}

 Let $\bfh_1(t),\dotsc,\bfh_k(t)$ be the $k$ column vectors of $M\calY_{\cbd}(t)$.
 We use these to define the entries $y'_{i,j}(t)$ of $\calY_{\cbd'}(t)$, which depend upon the position of the
 red checker $j$ in $\cbd'$.
 Recall from Figure~\ref{F:CheckerBoardVariety} that red checkers in $\cbd'$
 lie in one of the regions $A$--$F$. 

 If the red checker $j$ is in a row above $r$, so that  $j\in A$, $B$, or $C$, then 
 \begin{equation}\label{Eq:jinABC}
   \bfh_j(t)\ =\ \sum_{i\in A,B} y_{i,j}\bfm_i
            \ =\ \sum_{i\in A,B} y_{i,j}\bfm'_i(t)\,.
 \end{equation}

If $j=s$, then we have 
 \begin{equation}\label{Eq:j=s}
   \bfh_s(t)\ =\ \sum_{a\in A} -t  y_{a,s+1}\bfm_a\ 
               \ +\ y_{r+1,s+1}\Bigl( \sum_{b\in B} y_{b,s}\bfm_b\ +\ \bfm_r-t\bfm_{r+1}\Bigr)\,.
 \end{equation}

If $j=s{+}1$, so that the red checker is in row $R$, 
 \begin{equation}\label{Eq:j=s+1}
   \bfh_{s+1}(t)\ =\  \sum_{a\in A} y_{a,s+1}\bfm_a\ 
                    \ +\ \ \sum_{b\in B} y_{b,s+1}\bfm_b\
                    \ +\ \ \sum_{e\in E\smallsetminus\{R\}} y_{e,s+1}\bfm_e\ +\ \bfm_{R}\,.
 \end{equation}
When $R=r{+}1$, the last sum is empty, and the last term is $\bfm_{r+1}$.
Also, we always have $y_{r,s+1}=0$ as the red checker $s$ lies in the square of black checker $r$,
which is northwest of red checker $s{+}1$.

For all other red checkers $j$,  either $j\in F$ or 
$j\in E\smallsetminus\{R\}$, and 
$\bfh_{j}(t)\ =\  \sum_{i=1}^n y_{i,j}\bfm_i$.
Note that $y_{r,j}=0$ as red checker $s$ lies in the square of black checker $r$,
and both are northwest of red checker $j$.
For $j\in E\smallsetminus\{R\}$, we have $y_{r+1,j}=0$ as black checker $r{+}1$ is east of red checker
$j$. 

To define $y'_{i,j}(t)$, recall that $\bfm'_r(t)=\bfm_r-t\bfm_{r+1}$, $\bfm'_{r+1}(t)=\bfm_r$, and 
$\bfm'_i(t)=\bfm_i$ 
for $i\neq r, r{+}1$.
If $j\in A$, $B$, or $C$, then by~\eqref{Eq:jinABC}, we may define
$y'_{i,j}(t)=y_{i,j}$, for then 
 \begin{equation}\label{Eq:h'_in_m'}
   \bfh_j(t)\ =\ \sum_{i=1}^n y'_{i,j}(t)\bfm'_i(t)\,.
 \end{equation}
As checkers above row $r$ do not move, the entries $y'_{i,j}(t)$ for these $j$ have the
properties~\eqref{Eq:ZProperties}.

For $j=s$, we rewrite~\eqref{Eq:j=s} in terms of $\bfm'_i(t)$ to get
\[
   \bfh_s(t)\ =\  \sum_{a\in A} -t y_{a,s+1}\bfm_a'(t)\ 
               +\ y_{r+1,s+1}\Bigl( \sum_{b\in B} y_{b,s}\bfm'_b(t)\ +\ \bfm'_r(t)\Bigr)\,.
\]
Define $y'_{r,s}(t)=1$, $y'_{b,s}(t)=y_{b,s}$ for $b\in B$, 
$y'_{a,s}(t)=-t\cdot y_{a,s+1}/y_{r+1,s+1}$ for $a\in A$, and $y'_{i,s}(t)=0$ for $i\in E$.
With these definitions, we have
 \begin{equation}\label{bfh_s_again}
       \bfh_s(t)\ =\ y_{r+1,s+1} \cdot \Bigl( \sum_{i=1}^n y'_{i,s}(t)\bfm'_i(t) \Bigr)\,,
 \end{equation}
so that~\eqref{Eq:h'_in_m'} holds (up to the factor $ y_{r+1,s+1}$) for $j=s$.
Also,~\eqref{Eq:ZProperties} holds as in $\cbd'$ red checker $s$ lies in the same column as 
red checker $s{+}1$ in $\cbd$, and thus below the same black checkers in $A$ as  red checker $s{+}1$.

For $j=s{+}1$, replace $\bfh_{s+1}(t)$ by $\defcolor{\bfh'_{s+1}(t)}:=\bfh_{s+1}(t)+\frac{1}{t}\bfh_s(t)$.
Note that both $\bfh_s(t),\bfh_{s+1}(t)$ and  $\bfh_s(t),\bfh'_{s+1}(t)$ have the same span.
By~\eqref{Eq:j=s} and~\eqref{Eq:j=s+1}, this cancels the sums involving $A$ and the terms involving
$\bfm_{r+1}$. 
Its form is slightly different in the two cases $R>r{+}1$ and $R=r{+}1$.
When $R>r{+}1$, $\bfh'_{s+1}(t)$ becomes 
\[
   \sum_{b\in B} (\tfrac{1}{t}y_{r+1,s+1}\cdot y_{b,s} + y_{b,s+1}) \bfm_b
    \quad +\ \tfrac{1}{t} y_{r+1,s+1} \bfm_r \ +\ 
     \sum_{e\in E\smallsetminus\{r+1,R\}} y_{e,s+1} \bfm_e 
    \quad +\ \bfm_R\,.
\]
When $R=r{+}1$, we have $y_{r+1,s+1}=1$ and $\bfh'_{s+1}(t)$ is
\[
    \sum_{b\in B} (\tfrac{1}{t} y_{b,s} + y_{b,s+1}) \bfm_b \ +\ \tfrac{1}{t}\bfm_r\,.
\]
Let $y'_{r+1,s+1}(t)$ be the coefficient of $\bfm_r=\bfm'_{r+1}(t)$ in these expressions 
and for $i\neq r{+}1$, let $y'_{i,s+1}(t)$ be the coefficient of $\bfm_i=\bfm'_i(t)$.
As $\bfm'_r(t)$ does not appear,~\eqref{Eq:h'_in_m'} holds for $j=s{+}1$,
and these functions $y'_{i,s+1}(t)$ satisfy the properties~\eqref{Eq:ZProperties}.

We illustrate these definitions of $\bfh_s(t), \bfh_{s+1}(t),$ and $\bfh'_s(t)$ for the checkerboard $\cbd$ of
Figure~\ref{F:ChildBoard}. 
    Below are the columns $s$ and $s{+}1$ of the Stiefel
  coordinates $\calY_{\cbd}(t)$, which correspond to the vectors $\bfh_s(t)$ and $\bfh_{s+1}(t)$, and a column
  corresponding to the $\bfh'_{s+1}(t)$.
\begin{equation}\label{Eq:M_matrix}
{\TEN
\left[\begin{matrix}
      \cdot      & \cdot    & \cdot    \\
      \cdot      & \cdot    & \cdot    \\
     -ty_{3,5}    & y_{3,5}    & \cdot    \\
     -ty_{4,5}    & y_{4,5}    & \cdot    \\
   y_{10,5}y_{5,4} & y_{5,5}     & y_{5,5}+\tfrac{1}{t}y_{10,5}y_{5,4} \\
      \cdot      & \cdot    & \cdot      \\
   y_{10,5}y_{7,4} & y_{7,5}     & y_{7,5}+\tfrac{1}{t}y_{10,5}y_{7,4} \\\rule{0pt}{12pt}
   y_{10,5}y_{8,4} & y_{8,5}     & y_{8,5}+\tfrac{1}{t}y_{10,5}y_{8,4} \\
     y_{10,5}     & \cdot     & \tfrac{1}{t}y_{10,5} \\
   -t y_{10,5}     & y_{10,5}   & \cdot   \\
     \cdot      &  y_{11,5}   & y_{11,5} \\
     \cdot      &   1        &    1    \\
     \cdot      & \cdot      & \cdot      \\
     \cdot      & \cdot      & \cdot      \\
    \bfh_s(t)  &  \bfh_{s+1}(t)  &  \bfh'_{s+1}(t)  
\end{matrix}\right]}
\end{equation}

In the remaining cases, $j\in E\smallsetminus\{R\}$ and $j\in F$, the rows of the 0 entries
in those columns of $\calY_{\cbd}$ and $\calY_{\cbd'}$ are different.
For example, in $\calY_{\cbd}$ the entries in row $R$ are indeterminates, while they are 0 in
$\calY_{\cbd'}$. 
This is because the red checker $s{+}1$ in row $R$ is not in the square of the black checker in $\cbd$, but it is in
  that square in $\cbd'$.
This is observed in Figure~\ref{F:ChildBoard}, where the entry $y_{12,6}\neq 0$, but it is zero in Figure~\ref{F:CheckerBoardVariety}.
To obtain this zero entry in $\calY_{\cbd'}(t)$, we use $\bfh'_{s+1}(t)$ to reduce $\bfh_j(t)$.

If $j\in E\smallsetminus\{R\}$, note that $y_{r,j}=0=y'_{r,j}$.
Indeed, in $\cbd$, the red checker $s$ lies in the square of black checker $r$, while in $\cbd'$, the black
checker $r$ is northeast of the red checker $j$. 
Also, $y_{r+1,j}=0$, as the black checker $r{+}1$ is northeast of the red checker $j$ in $\cbd$.
Set $\defcolor{\bfh'_j(t)}:=\bfh_j(t)-y_{R,j}\bfh'_{s+1}(t)$.
When $R=r{+}1$, $y_{R,j}=0$ so $\bfh'_j(t)=\bfh_j(t)$, and otherwise
\[
  \bfh'_j(t)\ =\ \sum_{i\in B,E\smallsetminus\{R\}}(y_{i,j}-y_{R,j}\cdot y'_{i,s+1}(t))\bfm'_i(t)
                  \ -\  y_{R,j}\cdot y'_{r+1,s+1}(t) \bfm'_{r+1}(t)\,.
\]
Let $y'_{i,j}(t)$ be the coefficient of $\bfm'_i(t)$ in this expression.
Since red checker $j$ is in a row $\rho$ below red checker $s{+}1$, $y'_{\rho,s+1}(t)=0$ so
$y'_{\rho,j}(t)=1$. 
Also note that $y'_{r+1,j}(t)=-y_{R,j}\cdot y'_{r+1,s+1}(t)$ and $y'_{R,j}(t)=0$, by construction.

If $j\in F$, then the differences between $\calY_{\cbd}$ and $\calY_{\cbd'}$ are that $y_{r,j}=y'_{R,j}=0$ and both
$y_{R,j}$ and $y'_{r,j}$ are indeterminates. 
We observe this in column six in Figures~\ref{F:CheckerBoardVariety} and~\ref{F:ChildBoard}.
Suppose that $R>r{+}1$.
Then 
\[
   \bfh_j(t)\ =\ \sum_{i\in A,B,E\smallsetminus\{r+1,R\}} y_{i,j} \bfm_i
                \quad+\ y_{r+1,j}\bfm_{r+1}\ +\ y_{R,j}\bfm_R\,.
\]
Set $\defcolor{\bfh'_j(t)}:=\bfh_j(t)-y_{R,j}\bfh'_{s+1}(t)$, which is
\[
  \sum_{a\in A}y_{a,j}\bfm_a\ +\ 
  \sum_{i\in B,E\smallsetminus\{r+1,R\}} (y_{i,j} -y_{R,j}\cdot y'_{i,s+1}(t))\bfm_i
  \ \ -\ \tfrac{1}{t} y_{R,j}y_{r+1,s+1}\bfm_r \ +\ y_{r+1,j}\bfm_{r+1}\,.
\]
To rewrite this in terms of $\bfm_i'(t)$, by~\eqref{Eq:movingFlag} $\bfm_r=\bfm'_{r+1}(t)$,  
$\bfm_{r+1}=\frac{1}{t}(\bfm'_{r+1}(t)-\bfm'_r(t))$, and otherwise $\bfm_i=\bfm'_i(t)$, which gives
 \begin{multline*}
  \qquad  \bfh'_{j}(t)\ =\ 
  \sum_{a\in A}y_{a,j}\bfm'_a(t)\ +\ 
  \sum_{i\in B,E\smallsetminus\{r+1,R\}} (y_{i,j} -y_{R,j}\cdot y'_{i,s+1}(t))\bfm'_i(t) \\
  \ -\ \tfrac{1}{t}y_{r+1,j}\bfm'_r(t) 
   \ +\ \tfrac{1}{t}( y_{r+1,j}- y_{R,j}y_{r+1,s+1})\bfm'_{r+1}(t)\,.\qquad
 \end{multline*}
Let $y'_{i,j}(t)$ be the coefficient of $\bfm'_i(t)$ in this expression.
This expression shows that~\eqref{Eq:ZProperties} holds when $R>r{+}1$.
  
The argument is simpler when $R=r{+}1$, for then 
\[
   \bfh_j(t)\ =\ \sum_{i\in A,B,E\smallsetminus\{r+1\}} y_{i,j} \bfm_i
                \quad+\ y_{r+1,j}\bfm_{r+1}\,.
\]
and so $\defcolor{\bfh'_j(t)}:=\bfh_j(t)-y_{r+1,j}\bfh'_{s+1}(t)$ is 
\[
  \sum_{a\in A} y_{a,j}\bfm_a(t)\ +\ 
  \sum_{i\in B,E\smallsetminus\{r+1\}} (y_{i,j} - y_{r+1,j}\cdot y'_{i,s+1}(t)) \bfm_i(t)\,.
\]
Let $y'_{i,j}(t)$ be the coefficient of $\bfm'_i(t)$ in this expression.
Then
\[
   \bfh'_{j}(t)\ =\ \sum_{i=1}^n y'_{i,j}(t) \bfm'_i(t)\,,
\]
and these functions $y'_{i,j}(t)$ satisfy the properties~\eqref{Eq:ZProperties}.
\end{proof}

\begin{remark}
 In this proof, when $t\neq 0$ and for $j=s{+}1$, $j\in F$, or $j\in E\smallsetminus\{R\}$, we replaced
 $\bfh_j(t)$ by $\bfh'_j(t)=\bfh_j(t)-z\bfh'_\ell(t)$ where $\ell<j$ and $z$ is the coefficient of $\bfm'_i(t)$
 in $\bfh_j(t)$ and  $\bfm'_i(t)$ is the leading term in $\bfh'_\ell(t)$ (with coefficient 1).
 In all these cases, this put the vectors $\bfh_1(t),\dotsc,\bfh_k(t)$ into reduced echelon form
 with respect to the basis $M'(t)$.
 The content of the proof was that the resulting matrix $\calY_{\cbd'}(t)$ of coefficients satisfies  the
 properties~\eqref{Eq:ZProperties}. 
 Our software automatically performs this reduction to change coordinates from $\calY_{\cbd}(1)$ to
 $\calY_{\cbd'}(1)=\calY_{\cbd'}$ for the node $\cbd'$.\hfill$\diamond$
\end{remark}

\begin{remark}
  The formulation in Case III can lead to numerical instability in computation.
  From~\eqref{Eq:bfh_s}, $\bfh_s(t)$ (column $s$ in $M\calY_{\cbd}(t)$) is obtained by
  multiplying $\bfh_s$ (column $s$ in $M\calY_{\cbd}$) by $y_{r+1,s+1}$ and subtracting part of column $s{+}1$ in
  $M\calY_{\cbd}$ multiplied by $t$.
  (See also~\eqref{bfh_s_again} and the first column of the matrix~\eqref{Eq:M_matrix}, where $y_{r+1,s+1}=y_{10,5}$.)
  This leads to numerical instability in a computation when $y_{r+1,s+1}$ is close to zero.\hfill$\diamond$
\end{remark}

\subsection{Littlewood-Richardson Homotopy Algorithm}\label{SS:allTogether}
Using the definitions and results of the previous subsections, including Algorithm~\ref{Alg:core}, we
describe the Littlewood-Richardson Homotopy Algorithm.

Let $F$ be the flag in $\C^n$ corresponding to the identity matrix, and 
let $M$ be the opposite flag.
This corresponds to the permutation array for $\omega_0$ and the matrix \defcolor{$J$} 
with 1s along its anti-diagonal.
These flags are at the root of each checkerboard game.
Fix a Schubert problem $(\beta^1,\dotsc,\beta^s)$ for $\Gr(k,n)$ and consider its checkerboard tournament
$\calT$.
Every node in $\calT$ is a checkerboard $\cbd$ and has an  intermediate Schubert
problem~\eqref{Eq:Intermediate_def}, for flags $F^\ell,\dotsc,F^s$ which will be determined in the algorithm.
The checkerboard game of such a node lies in level $\ell{-}2$ of $\calT$.

\bigskip
\begin{breakablealgorithm}
\caption{Littlewood-Richardson Homotopy Algorithm}\label{Alg:LRH}
\begin{algorithmic}[1]
\REQUIRE {An instance of a Schubert problem in $\Gr(k,n)$ given by
   two positive integers $k<n$, a list of brackets $(\beta^1,\dotsc,\beta^s)$ such that 
   $\|\beta^1\|+\dotsb+\|\beta^s\|=k(n{-}k)$, and flags $E^1,\dotsc,E^s$ represented by invertible $n\times n$ matrices. 
   }
\ENSURE{All solutions to the instance
    \begin{equation}\label{Eq:userInstance}
       X_{\beta^1}E^1\;\cap\; \dotsb\;\cap\; X_{\beta^s} E^s\,,
    \end{equation}
    represented in Stiefel coordinates as $n\times k$ matrices.
   }
\STATE{Generate random upper unitriangular $n\times n$ matrices $A_3,\dotsc,A_s$.}
\STATE{Compute the checkerboard tournament $\calT$ for $\beta^1,\dotsc,\beta^s$.}
\STATE{Populate each node $\cbd$ of $\calT$ with an empty list of solutions and with flags 
   \begin{equation}\label{Eq:flagDefinitions}
       F^\ell\ :=\ A_\ell J\,,\ 
       F^{\ell+1}\ :=\ A_\ell A_{\ell+1} J\,,\  \dotsc\,,\ 
       F^s\ :=\ A_\ell A_{\ell+1}\dotsb A_s J\,,
   \end{equation}
    where $\cbd$ lies in a checkerboard game at level $\ell-2$ of $\calT$, and the corresponding 
    intermediate Schubert problem is~\eqref{Eq:Intermediate_def}. 
    Mark the node as  as {\tt`unresolved'}.
   }
\STATE{
   Populate the leaf of the last checkerboard game with the single solution~\eqref{Eq:trivSChProb} to
   \[
      X_{(\beta^s)^\vee} F \cap X_{\beta^s} A_s J 
   \]
  represented in Stiefel coordinates as the echelon form of the submatrix of $A_s$ consisting of its columns
  $n{+}1{-}\beta^s_i$ for $i=1,\dotsc,k$.
  Mark this node as {\tt`resolved'}.
}
\WHILE{Node $\cbd'$ of $\calT$ is unresolved}
	\IF {any child of $\cbd'$ is unresolved} 
		\STATE {replace $\cbd'$ by this child and return.}
	\ENDIF
	\IF {all children of $\cbd'$ are resolved}
		\FOR{\textbf{each} child $\cbd$ of $\cbd'$}
			\IF{$\cbd'$ is a leaf of a checkerboard game at level $\ell{-}2$}
                         \STATE{$\cbd$  is the root of a game at level $\ell{-}1$.}
				\FORALL{solutions $y=(y_{i,j})$ in node $\cbd$} 
					\STATE{append $A_\ell (y_{i,j})$ to the list of solutions in $\cbd'$.}
				\ENDFOR
			\ELSIF {$\cbd$ is a child of $\cbd'$ in the same checkerboard game as $\cbd'$}
				\FORALL{solutions $y$ of node $\cbd$}
					\STATE{Use Algorithm~\ref{Alg:core} to obtain the corresponding solution $y'$ }
					\STATE{Append $y'$ to the list of solutions for $\cbd'$.}
				\ENDFOR
			\ENDIF
		\ENDFOR
	\ENDIF
\ENDWHILE
\STATE{
   When all nodes of $\calT$ are resolved, the solutions at its root are all solutions to the instance
   \[
      X_{\beta^1} F\;\cap\; X_{\beta^2} J\ \cap\
      X_{\beta^3} (A_3J)\;\cap\;  \dotsb \;\cap\; X_{\beta^s} (A_3A_4\dotsb A_s J)\,.
   \]
    Replace each solution $y$  at the root by $E^1 y$, producing all solutions to the
    instance of this  Schubert problem given by the flags $E^1=E^1F, (E^1J), (E^1A_3J),\dotsc$.
%
%
   }
\STATE{Create a homotopy between these flags and the user-defined flags $E^1,E^2,\dotsc,E^s$ 
   and follow these points $E^1 y$ along that homotopy, to obtain all solutions to the user's 
   instance~\eqref{Eq:userInstance}.
   }
\end{algorithmic}
\end{breakablealgorithm}

\begin{proof}[Proof of correctness.]
  We prove that the algorithm performs as described when the input flags $E^1,\dotsc,E^s$ are general.
  This will also prove Theorem~\ref{Th:Main}.
  Every node $\cbd'$ in the checkerboard tournament corresponds to an intermediate Schubert problem
 \begin{equation}\label{Eq:ISP}
   Y_{\cbd'}(F,M')\ \cap\ X_{\beta^\ell}F^{\ell}\ \cap\ \dotsb\ \cap\ X_{\beta^s}F^s\,,
 \end{equation}
 where $\cbd'$ is a node in a checkerboard game at level $\ell-2$ in $\calT$ and the flags $F^\ell,\dotsc,F^s$ are as
 defined by~\eqref{Eq:flagDefinitions}.
 Let \defcolor{$S(\cbd')$} be the set of solutions to this intermediate Schubert problem~\eqref{Eq:ISP}.
 We claim that, when a node $\cbd'$ is resolved in Algorithm~\ref{Alg:LRH}, the set of solutions in that node (as
 constructed in Steps 10--22) equals $S(\cbd')$, recorded in the Stiefel coordinates $\calY_{\cbd'}$ of
 Subsection~\ref{SS:CBV}. 
 Establishing this claim, as well as the arguments presented below about Steps 25 and 26, will complete the proof of
 correctness of Algorithm~\ref{Alg:LRH}.

 For any checkerboard $\cbd$, the Stiefel coordinates $\calY_{\cbd}$ parameterize only a dense
   subset of a checkerboard  variety $Y_\cbd(F,M)$.
   Our arguments below ignore this distinction.
   To validate them, note that for each checkerboard the points of the checkerboard  variety $Y_\cbd(F,M)$
   that are not parameterized by $\calY_{\cbd}$ form a proper subset, $Z$.
   As the flags $F^i$ are general, Kleiman's Theorem~\cite{Kl74} asserts that there will be no points of~\eqref{Eq:ISP}
   that lie in $Z$. 
   As there are only finitely many checkerboards, the choice of general flags $F^i$ and $E^i$ will guarantee that the
   algorithm computes all solutions to~\eqref{Eq:userInstance}.

 We prove the claim by induction on $\calT$.
 The claim holds at the leaf of $\calT$, by construction:
 Step 4 places the unique solution of the intermediate problem of the leaf (explained at the end of
 Subsection~\ref{SS:SchubertProblems}), and marks that node as resolved.\smallskip

 Suppose that $\cbd'$ is a node of $\calT$ that is not the leaf of any checkerboard game in $\calT$. 
 Then either $\cbd'$ has a unique child $\cbd$ or possibly two, $\cbd$ and $\cbd''$, in that checkerboard game.
 Before node $\cbd'$ is resolved, its child node(s) must be resolved.
 By the induction hypothesis, Algorithm~\ref{Alg:LRH} has populated $\cbd$ with the solutions $S(\cbd)$ to its
 intermediate problem, and the same for $\cbd''$ if it exists.
 The points $S'(\cbd')$ used by Algorithm~\ref{Alg:LRH} to populate the node $\cbd'$ are obtained
 from the solutions in
 $S(\cbd)$ (and $S(\cbd'')$) using Algorithm~\ref{Alg:core}, which follows them along the homotopy induced by the family
 $M\calY_{\cbd}(t)$ (or $M\calY_{\cbd''}(t)$).

 In the geometric Littlewood-Richardson rule, these families are Stiefel coordinates for the family
 $Y_{\cbd,\cbd'}(t)$ for $t\in \C$ with fiber $Y_{\cbd'}(F,M')$ over $t=1$ and fiber 
 $Y_{\cbd}(F,M)$ (or  $Y_{\cbd}(F,M)\cup Y_{\cbd''}(F,M)$) over $t=0$.
 This implies that $S'(\cbd')$ is the set of solutions to the intermediate problem at the node $\cbd'$.

 We prove the claim when $\cbd'$ is a leaf of a checkerboard game.
 Such a leaf has only one child in the tournament $\calT$, which is the root of the subsequent checkerboard game.
 In this case, there is a bracket $\gamma$ such that the intermediate problems at these two nodes are
 \begin{eqnarray}
  \makebox[0.6cm][l]{$\cbd'$}
     &&  X_\gamma F\;\cap\; X_{\beta^\ell}F^{\ell}\;\cap\;\dotsb\;\cap\; X_{\beta_s} F^s  \label{Eq:cbdp}\\
  \makebox[0.6cm][l]{$\cbd$}
   &&  X_\gamma F\;\cap\; X_{\beta^\ell}J\;\cap\; 
         X_{\beta^{\ell+1}}\widetilde{F}^{\ell+1}\;\cap\;\dotsb\;\cap\; X_{\beta_s}\widetilde{F}^s\,,  \label{Eq:cbd}
 \end{eqnarray}
 where the flags $F^\ell,\dotsc,F^s$ are defined by~\eqref{Eq:flagDefinitions}, as are the flags
 $\widetilde{F}^{\ell+1},\dotsc,\widetilde{F}^s$, except that the index $\ell$ of the ambient checkerboard poset
 changes, so that 
\[
   \widetilde{F}^{\ell+1}\ =\ A_{\ell+1} J\,,\ 
   \widetilde{F}^{\ell+2}\ =\ A_{\ell+1}A_{\ell+2} J\,,\  \dotsc\ ,\ 
   \widetilde{F}^{s}\ =\ A_{\ell+1}\dotsb A_{s} J\,.
\]
 Since $A_\ell F$ and $F$ give the same flag, the intersection~\eqref{Eq:cbdp} is obtained from that of~\eqref{Eq:cbd}
 through left multiplication by $A_\ell$.
 Thus if $\cbd$ is resolved and populated by the points $S(\cbd)$ in the intersection~\eqref{Eq:cbd}, then Steps 13--15
 of Algorithm~\ref{Alg:LRH} populate node $\cbd'$ with all the points $S(\cbd')$ in the intersection~\eqref{Eq:cbdp},
 completing the proof of the claim.

 The argument for Step 25, going from the root of $\calT$, is that the intermediate Schubert problem passes from
\[
    X_{\beta_1} F\;\cap\;
    X_{\beta_2} J\;\cap\;
    X_{\beta_3} F^3\;\cap\ \dotsb\ \cap\;  X_{\beta_s} F^s
\]
 to
 \begin{equation}\label{Eq:penultimate}
    X_{\beta_1} E^1\;\cap\;
    X_{\beta_2} E^1 J\;\cap\;
    X_{\beta_3} E^1 F^3\;\cap\ \dotsb\ \cap\;  X_{\beta_s} E^1 F^s
  \end{equation}
 which is the same as passing between leafs and roots in the proof of the claim.
 Finally, Step 26 is simply applying a parameter homotopy~\cite{LSY89,MS89} between the solutions to~\eqref{Eq:penultimate}
 and those of the original Schubert problem
\[
    X_{\beta_1} E^1\;\cap\;
    X_{\beta_2} E^2\;\cap\;
    X_{\beta_3} E^3\;\cap\ \dotsb\ \cap\;  X_{\beta_s} E^s\,.
\]
 This completes the proof of correctness.
\end{proof}

\section{The Performance of the Implementation}\label{S:Examples}
The Littlewood-Richardson homotopy algorithm has two implementations.
One is in the interpreted language of {\tt Macaulay2}~\cite{M2} 
using its {\tt NumericalAlgebraicGeometry} package~\cite{NAG4M2},
and the other is compiled code and uses the Polynomial Homotopy Continuation
package {\tt PHCpack}~\cite{PHCpack}.
These implementations, as well as implementations of the 
Pieri Homotopy algorithm~\cite{HSS98,HV2000} may be
called from the {\tt NumericalSchubertCalculus} package of {\tt Macaulay2}.
An introduction to its capabilities and use is given in~\cite{NSC_article}.
This software is free and open source, available on {\tt github} with the
compiled version accessible to the Python programmer via {\tt phcpy}~\cite{Ver2014}.

Table~\ref{Ta:timings} gives a selection of the Schubert problems 
this software is able to solve.
\begin{table}[htb]
 \[
 \begin{array}{|l|l|r|r|r|}\hline
   \mbox{Grassmannian} & \mbox{Schubert Problem} & d & 
   \mbox{Interpreted} & \mbox{Compiled} \RP\\ \hline \hline
    \Gr(2,7) &  [5,7]^{10}     &    42 &  249.58 &     1.3652 \RP\\ \hline
    \Gr(2,8) &  [5,8]^6       &    15  &  104.04  &   0.7135\RP\\ \hline
    \Gr(2,9) &  [6,9]^7       &    36  &  455.93   &  3.4541 \RP\\ \hline
    \Gr(2,10)&  [7,10]^7      &   91   & 1899.54   & 17.3442 \RP\\ \hline\hline
%
     \Gr(3,6)  & [3,5,6]^9          &  42 &  148.65 &   1.6758 \RP\\ \hline
     \Gr(3,7) &  [4,6,7]^{10}[3,6,7] &  252 &  2040.51  &   28.5882 \RP\\ \hline
     \Gr(3,8) &  [4,6,8]^5           & 32   &   140.64   &   8.1716 \RP\\ \hline\hline
    \Gr(4,8) &  [3,4,7,8]^4            &   6 &    29.79&    5.7789  \RP\\ \hline
    \Gr(4,8) &  [3,6,7,8]^6[3,4,7,8]  &   50 &   637.15  &   27.4836  \RP\\ \hline
    \Gr(4,8) &  [4,6,7,8]^8[3,4,7,8]^2 & 220 &  3736.61  &   55.8480   \RP\\ \hline
 \end{array}
\]
\caption{Timings of Schubert problems}\label{Ta:timings}
\end{table}
These timings (in seconds) compare the performance of the two implementations of Algorithm~\ref{Alg:LRH}
on the same random instance of the problem.
These were computed on a Macbook Air with a dual-core Intel Core i5 1.6GHz processor.
Here,  the exponents indicate repeated brackets.

The compiled implementation is both faster and more robust.
Table~\ref{Ta:PHC} shows some Schubert problems it can compute, and their timings in h:m:s format.
\begin{table}[htb]
 \[
 \begin{array}{|l|l|r|r|r|}\hline
   \mbox{Grassmannian} & \mbox{Schubert Problem} & d & \mbox{Time} \RP\\ \hline \hline
   \Gr(3,9) & [6,8,9]^{14}[5,8,9]^2    & 30459 &  $59:11:50$    \RP\\ \hline 
   \Gr(4,8) & [4,6,7,8]^{16}           & 24024  & $34:09:46$  \RP\\ \hline 
   \Gr(4,9) & [5,7,8,9]^8 [4,6,8,9]^4 & 25142 &  $293:02:54$    \RP\\ \hline 
   \Gr(5,10) &[4,6,8,9,10]^5 [3,6,7,9,10]^2 & 8860 &  $216:03:54$  \RP\\ \hline  
 \end{array}
\]
\caption{Timings of Schubert problems}\label{Ta:PHC}
\end{table}
These were computed on a single processor of a server with four Six-Core AMD Opteron(tm) 8435 processors, each with an
800MHz clock speed, and 64GB memory.

\bibliographystyle{amsplain}
\bibliography{bibl}

\end{document}